\newtheorem{thm}{Theorem}[section]
\newtheorem{prop}[thm]{Proposition}
\theoremstyle{definition}
\newtheorem{definition}[thm]{Definition}
\theoremstyle{remark}
\newtheorem{remark}[thm]{Remark}
\theoremstyle{definition}
\newtheorem{example}[thm]{Example}
\title{Explicit construction of matrix-valued orthogonal polynomials of arbitrary size}
\author{Ignacio Bono Parisi}
\address{CIEM-FaMAF, Universidad Nacional de C\'ordoba, C\'ordoba~5000, Argentina}
\email{ignacio.bono@unc.edu.ar}
\subjclass[2020]{33C45, 42C05, 34L05, 34L10}
\thanks{This paper was partially supported by SeCyT-UNC, CONICET, PIP 11220200102031CO}
\keywords{Matrix-valued orthogonal polynomials, matrix Bochner problem, Darboux transformations, discrete-continuous bispectrality, irreducible weights}
\begin{document}
\begin{abstract}
    In this paper, we explicitly provide expressions for a sequence of orthogonal polynomials associated with a weight matrix of size $N$ constructed from a collection of scalar weights $w_{1}, \ldots, w_{N}$:
    $$W(x) = T(x)\operatorname{diag}(w_{1}(x), \ldots, w_{N}(x))T(x)^{\ast},$$  
    where $T(x)$ is a specific polynomial matrix. We provide sufficient conditions on the scalar weights to ensure that the weight matrix $W$ is irreducible. Furthermore, we give sufficient conditions on the scalar weights to ensure that each term in the constructed sequence of matrix orthogonal polynomials is an eigenfunction of a differential operator. We also study the Darboux transformations and bispectrality of the orthogonal polynomials in the particular case where the scalar weights are the classical weights of Jacobi, Hermite, and Laguerre.   
    With these results, we construct a wide variety of bispectral matrix-valued orthogonal polynomials of arbitrary size, which satisfy a second-order differential equation. 
\end{abstract} 
\maketitle

\section{Introduction}
The theory of matrix-valued orthogonal polynomials, which are matrix polynomials orthogonal with respect to a matrix-valued inner product defined by a weight matrix $W(x)$, was initiated in 1949 by M. G. Krein \cite{K49} and \cite{K71}. Since their origin, matrix-valued orthogonal polynomials have extended the classical theory of scalar polynomials to a richer matrix context. This generalization has provided new tools and insights in various mathematical fields, including spectral theory, integrable systems, operator theory, and special functions.

These polynomials are of particular interest when they are eigenfunctions of a matrix-valued differential operator, extending the classical scalar orthogonal polynomials of Hermite, Laguerre, and Jacobi. 

The classification of all weight matrices $W(x)$ whose associated sequences of orthogonal polynomials are eigenfunctions of a second-order differential operator is known as the Matrix Bochner Problem. This problem was initially posed by S. Bochner \cite{B29} for the scalar case and later extended to the matrix case by A. Durán \cite{D97}. While this problem is solved in the scalar case (S. Bochner \cite{B29}), it remains open in the matrix setting. A major breakthrough was achieved by R. Casper and M. Yakimov \cite{CY18}, who provided a full classification of solutions under a natural additional hypothesis. Complementary progress was made in \cite{BP23} and \cite{BP24-1}, where the authors constructed irreducible weight matrices that do not satisfy this hypothesis, showing the existence of solutions beyond the initial classification established by R. Casper and M. Yakimov.

In general, given a weight matrix $W(x)$, obtaining explicit and manageable expressions for the associated orthogonal polynomials is a challenging task. There are some techniques in the literature to obtain this expression like the Rodrigues formula, which involves computing the $n$-th derivatives of the weight matrix, to derive expressions for matrix-valued orthogonal polynomials (see \cite{IKP19}, \cite{DG07}, \cite{DL07}, \cite{KR19}, \cite{BCD12}, \cite{CC22}, \cite{DD08}).  Another approach to obtaining an expression for a sequence of orthogonal polynomials is by using the Darboux transformation, which involves factorizing an operator and then mapping a known sequence of orthogonal polynomials to a new sequence via a differential operator (see \cite{C18}, \cite{BPstrong}, \cite{BPZ24}, \cite{CY18}). However, these techniques require the weight matrix involved to be sufficiently well-behaved, and even then, they do not always yield manageable formulas. 

Explicit expressions of matrix-valued orthogonal polynomials are crucial, as they provide concrete examples for verifying and extending well-known properties from scalar orthogonal polynomials to the matrix setting. Furthermore, these expressions are indispensable for applications, such as the time and band limiting problem over a noncommutative ring and matrix-commutative operators (see \cite{CGYZ24,GPZ20,GPZ17,GPZ15,CG17,CG15}).

\

In this paper, we construct explicit expressions for matrix-valued orthogonal polynomials associated with specific weight matrices of the form 
$$W(x) = T(x) \operatorname{diag}(w_{1}(x),\ldots,w_{N}(x))T(x)^{\ast},$$
where $T(x) = e^{Ax}$, with $A$ a nilpotent matrix and $w_{1},\ldots,w_{N}$ are scalar weights. For certain choices of the scalar weights $w_i$, the resulting weight matrices have already appeared in the literature. Specifically, for $N = 2$, they were studied in \cite{DG04} for the Hermite and Laguerre scalar weights, and for arbitrary size, they appear in \cite{BP24-1} when the scalar weights $w_i$ are classical weights of the same type and satisfy the condition that $w_i(x)/w_j(x)$ is not a rational function. The latter cases correspond to solutions of the Bochner problem that fall outside the scope of the classification theorem by Casper and Yakimov \cite{CY18}.

In this work, we extend the freedom in the choice of the scalar weights $w_i$, allowing them to go beyond the classical examples. 
Our main result is Theorem \ref{main}, in which leveraging the structure of these weight matrices, we derive simple and explicit expression for a sequence $Q_{n}(x)$ of orthogonal polynomials for $W(x)$ in terms of the monic orthogonal polynomials $p_{n}^{w_{i}}(x)$ of the scalar weights $w_{i}(x)$. The expression we obtain is quite manageable and is given by  
$$Q_{n}(x) = \begin{psmallmatrix}p_{n}^{w_{1}}(x) & a_{1}p_{n+1}^{w_{2}}(x) & 0 & 0 & 0 & \cdots \\ 
-a_{1} \frac{\|p_{n}^{w_{2}}\|^{2}}{\|p_{n-1}^{w_{1}}\|^{2}} p_{n-1}^{w_{1}}(x) & p_{n}^{w_{2}}(x) & -a_{2} \frac{\|p_{n}^{w_{2}}\|^{2}}{\|p_{n-1}^{w_{3}}\|^{2}}p_{n-1}^{w_{3}}(x) & 0 & 0 & \cdots \\ 0 &  a_{2} p_{n+1}^{w_{2}}(x) & p_{n}^{w_{3}}(x) & a_{3}p_{n+1}^{w_{4}}(x) & 0 & \cdots \\ 
0 & 0 & -a_{3} \frac{\|p_{n}^{w_{4}}\|^{2}}{\|p_{n-1}^{w_{3}}\|^{2}}p_{n-1}^{w_{3}}(x) & p_{n}^{w_{4}}(x) & -a_{4} \frac{\|p_{n}^{w_{4}}\|^{2}}{\|p_{n-1}^{w_{5}}\|^{2}}p_{n-1}^{w_{5}}(x) & \cdots \\
0 & 0 & 0 & a_{4} p_{n+1}^{w_{4}}(x) & p_{n}^{w_{5}}(x) & \cdots \\
\vdots & \vdots & \vdots & \vdots & \vdots & \ddots\end{psmallmatrix}T(x)^{-1}.$$ 
In this way, we can create an extension of scalar orthogonal polynomials to matrix-valued orthogonal polynomials and generate a wide variety of examples.

For the benefit of the reader, in \eqref{W 2} and \eqref{W 3} we present the explicit expressions of the weight $W$ and the polynomials $Q_{n}(x)$  for the particular cases when $N = 2$ and $N = 3$.

A natural question that arises is whether these matrix-valued polynomials are bispectral. In Theorem \ref{bisp}, we establish a sufficient condition on the scalar orthogonal polynomials \( p_{n}^{w_{i}} \) to ensure that the matrix polynomials constructed in Theorem \ref{main} are eigenfunctions of a differential operator. Using this result, in Section \ref{S4} we construct several families of bispectral sequences of matrix-valued orthogonal polynomials from the classical Hermite, Laguerre, and Jacobi polynomials. We even construct a novel sequence of bispectral polynomials by combining weights from different classical families, specifically a Hermite weight and a Laguerre weight. To the best of our knowledge, this approach of mixing weights from distinct families has not been explored in the literature before.

We take advantage of the explicit expressions of these orthogonal polynomials to analyze when they can be connected to direct sums of classical scalar orthogonal polynomials via Darboux transformations. Furthermore, these expressions offer a straightforward approach to deriving the three-term recurrence relation and exploring additional properties of the polynomials.

Finally, we study the irreducibility of the weight matrices $W$, which depends on the choice of the scalar weights $w_i$. In Section \ref{S5}, we show that there are many situations in which $W$ is an irreducible weight. Thus, the choice of scalar weights $w_i$ is not as restricted as it might initially appear, and a wide range of irreducible examples can be constructed.

The paper is organized as follows. In section \ref{S2} we give the preliminaries of matrix-valued orthogonal polynomials, the Darboux transformation, and the classical scalar polynomials. In Section \ref{S3}, we introduce the weight matrix $W$ constructed from a finite collection of scalar weights supported on the same interval. In Theorem \ref{main}, we provide an explicit expression for a sequence of matrix-valued orthogonal polynomials for $W$ in terms of the scalar monic orthogonal polynomials associated with the collection of scalar weights.
In Section \ref{S4}, we study explicit examples constructed from the classical weights in order to obtain examples that are bispectral, and we study the relationship of these examples with Darboux transformations of a direct sum of scalar weights. 
Finally, in Section \ref{S5}, we discuss the irreducibility of the weights $W$ constructed in Section \ref{S3}.

\section{Background}\label{S2}
\subsection{Orthogonal polynomials and the algebra $\mathcal{D}(W)$} 

\begin{definition}
    A weight matrix $W$ of size $N$ supported on an interval $(x_{0},x_{1})$, is an integrable function $W:\mathbb{R}\to\operatorname{Mat}_{N}(\mathbb{C})$ such that $W(x)$ is definite positive almost everywhere in $(x_{0},x_{1})$, $W(x) = 0$ if $x \notin (x_{0},x_{1})$, and $\int_{x_{0}}^{x_{1}}x^{n}W(x)dx < \infty$ for all $n\geq 0$. 
\end{definition}

For a weight matrix $W$ one can induce an inner product in $\operatorname{Mat}_{N}(\mathbb{C}[x])$ given by 
\begin{equation}\label{inner prod}
    \langle P , Q \rangle  = \int_{x_{0}}^{x_{1}}P(x)W(x)Q(x)^{\ast}dx, \text{ for all } P,Q \in \operatorname{Mat}_{N}(\mathbb{C}[x]).
\end{equation}
By using the above inner product, one can construct a sequence of orthogonal polynomials $Q_{n}(x)$ for the weight matrix $W$. That is, for each $n\in \mathbb{N}_{0}$, $Q_{n}(x)$ is a polynomial of degree $n$ with leading coefficient a nonsingular matrix, and $\langle Q_{n}, Q_{m} \rangle = 0$ for all $n \not = m$. Moreover, if $R_{n}(x)$ is another sequence of matrix-valued orthogonal polynomials for $W$, then $R_{n}(x) = M_{n}Q_{n}(x)$ for some sequence of invertible matrices $M_{n} \in \operatorname{Mat}_{N}(\mathbb{C})$.  By a standard argument (see \cite{K49} or \cite{K71}), one can prove that the sequence $Q_{n}$ satisfies a three-term recurrence relation of the form 
\begin{equation} \label{three-term}
    Q_{n}(x)x = A_{n}Q_{n+1}(x) + B_{n}Q_{n}(x) + C_{n}Q_{n-1}(x)
\end{equation}
for some sequence of constant matrices $A_{n},B_{n},C_{n} \in \operatorname{Mat}_{N}(\mathbb{C})$.

In other words, the sequence $Q_{n}(x)$ is an eigenfunction of a discrete operator $\mathscr{L} = A_{n}\delta + B_{n} + C_{n}\delta^{-1}$, where $\delta^{j}$ acts on the left-hand side on a sequence as $\delta^{j} \cdot p_{n} = p_{n+j}$, for $j \in \mathbb{Z}$. We have that 
\begin{equation}\label{discL}
    \mathscr{L} \cdot Q_{n}(x) = Q_{n}(x)x.
\end{equation}

\

Throughout this paper, we consider differential operators
$$D = \sum_{j=0}^{m}\partial^{j}F_{j}(x),$$
where $\partial = \frac{d}{dx}$, and $F_{j}(x)$ is a matrix-valued function.
These operators act on the right-hand side of matrix-valued functions as follows 
$$P(x)\cdot D = \sum_{j=0}^{m}\partial^{j}(P)(x)F_{j}(x).$$
We introduce the algebra $\mathcal{D}(W)$.
\begin{definition}
    Given a weight matrix $W$, we introduce the algebra $\mathcal{D}(W)$ of all differential operators that have a sequence of orthogonal polynomials for $W$ as eigenfunctions. That is,
    $$\mathcal{D}(W) = \left \{ D = \sum_{j=0}^{m}\partial^{j}F_{j} \, : \, Q_{n}(x) \cdot D = \Lambda_{n}(D)Q_{n}(x), \text{ with } \Lambda_{n}(D) \in \operatorname{Mat}_{N}(\mathbb{C}), \text{ for all } n \geq 0 \right \},$$
    where $Q_{n}(x)$ is a sequence of orthogonal polynomials for $W$. 
\end{definition}
\begin{remark}
    The definition of $\mathcal{D}(W)$ is independent of the choice of the sequence of orthogonal polynomials for $W$. This is because any two sequences of orthogonal polynomials, $Q_{n}$ and $R_{n}$, for $W$ differ by an invertible matrix $M_{n}$. Consequently, $Q_{n}(x) \cdot D = \Lambda_{n}(D) Q_{n}(x)$ if and only if $R_{n}(x) \cdot D = M_{n} \Lambda_{n}(D) M_{n}^{-1} R_{n}(x)$. Thus, a sequence of orthogonal polynomials $Q_{n}$ is an eigenfunction of an operator $D$ if and only if any other sequence $R_{n}$ for the same weight $W$ is also an eigenfunction of $D$.
\end{remark}

If there exists a nonconstant differential operator $D \in \mathcal{D}(W)$, then, along with the operator defined in \eqref{discL}, $Q_{n}$ is simultaneously an eigenfunction of both a discrete operator and a differential operator:
$$\mathscr{L} \cdot Q_{n}(x) = Q_{n}(x) x, \quad \text{and} \quad Q_{n}(x) \cdot D = \Lambda_{n}(D) Q_{n}(x).$$
In this case, we say that $Q_{n}(x)$ is bispectral.

\

Given two weight matrices $W$ and $\tilde{W}$ supported on $(x_{0},x_{1})$, we say that $W$ is \emph{similar} to $\tilde{W}$ if there exists a nonsingular matrix $M \in \operatorname{Mat}_{N}(\mathbb{C})$ such that $\tilde{W}(x) = MW(x)M^{\ast}$, for all $x \in (x_{o},x_{1})$. In this case, we have that $Q_{n}(x)$ is a sequence of orthogonal polynomials for $W$ if and only if $\tilde{Q}_{n}(x) = M{Q}_{n}(x)M^{-1}$ is a sequence of orthogonal polynomials for $\tilde{W}$, and their associated algebras satisfy that $\mathcal{D}(\tilde{W}) = M\mathcal{D}(W)M^{-1}$.

\begin{definition} A weight matrix $W$ of size $N$ supported on $(x_{0},x_{1})$ is said to be \emph{reducible} if there exist weight matrices $W_{1}$ and $W_{2}$ of sizes $N_{1}$ and $N_{2}$, respectively, with $N = N_{1} + N_{2}$, and a nonsingular matrix $M \in \operatorname{Mat}_{N}(\mathbb{C})$ such that 

$$W(x) = M\begin{psmallmatrix} W_{1}(x) & 0 \\ 0 & W_{2}(x) \end{psmallmatrix} M^{\ast}, \quad \text{ for all } x \in (x_{0},x_{1}).$$
We say that $W$ is an \emph{irreducible weight} if it is not reducible. \end{definition}

\subsection{The Darboux Transformation}
In this subsection, we introduce the notion of the Darboux transformation between weight matrices. 
\begin{definition}
    We say that a differential operator $D$ is a \emph{degree-preserving} operator if, for every polynomial $P$ of degree $n$, it follows that $P(x)\cdot D$ is a polynomial of degree $n$ for all but finitely many $n$.
\end{definition}
\begin{definition} \label{darb def}
    Let $W$ and $\tilde{W}$ be weight matrices of size $N$, and let $Q_{n}(x)$ and $\tilde{Q}_{n}(x)$ be a sequence of orthogonal polynomials for $W$ and $\tilde{W}$, respectively. We say that $\tilde{W}$ is a Darboux transformation of $W$ if there exists a differential operator $D \in \mathcal{D}(W)$ that can be factored as $D = \mathcal{D}_{1}\mathcal{D}_{2}$, with $\mathcal{D}_{1}$ and $\mathcal{D}_{2}$ degree-preserving operators, such that
    $$Q_{n}(x) \cdot \mathcal{D}_{1} = A_{n} \tilde{Q}_{n}(x),$$
    for some sequence of constant matrices $A_{n}$, for all $n \geq 0$.
\end{definition}
As a consequence of the above definition, it follows that the operator $\tilde{D} = \mathcal{D}_{2}\mathcal{D}_{1}$ belongs to the algebra $\mathcal{D}(\tilde{W})$.

\

A sufficient condition was given in \cite{BPZ24} to ensure that $\tilde{W}$ is a Darboux transformation of $W$ without requiring the factorization of a differential operator.    
\begin{prop}[Theorem 3.3, \cite{BPZ24}]\label{darb eq}
    Let $W$ and $\tilde{W}$ be weight matrices, and let $Q_{n}$ and $\tilde{Q}_{n}$ sequences of orthogonal polynomials for $W$ and $\tilde{W}$, respectively. If there exists a differential operator $\mathcal{D}_{1}$ such that 
    $$P_{n}(x)\cdot \mathcal{D}_{1} = A_{n}\tilde{P}_{n}(x), \text{ for all } n\in\mathbb{N}_{0},$$
    with $A_{n} \in \operatorname{Mat}_{N}(\mathbb{C})$ nonsingular for all but finitely many $n\in \mathbb{N}_{0}$, then $\tilde{W}$ is a Darboux transformation of $W$.
\end{prop}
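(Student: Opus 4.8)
The plan is to take for the unknown second factor $\mathcal{D}_2$ in Definition \ref{darb def} the formal adjoint of the given operator $\mathcal{D}_1$ with respect to the two inner products, and then to read off the factorization required there. I first note that $\mathcal{D}_1$ is degree-preserving: since $Q_n(x)\cdot\mathcal{D}_1 = A_n\tilde{Q}_n(x)$ with $A_n$ nonsingular and $\deg\tilde{Q}_n = n$ for all but finitely many $n$, the image $Q_n\cdot\mathcal{D}_1$ has degree exactly $n$ for those $n$; because $\{Q_n\}$ is a basis of $\operatorname{Mat}_N(\CC[x])$ (the leading coefficients being nonsingular), this forces $\mathcal{D}_1$ to send degree-$n$ polynomials to degree-$n$ polynomials.

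Next I would construct $\mathcal{D}_2$. Writing $\mathcal{D}_1=\sum_{j=0}^{m}\partial^{j}F_{j}$, integrating by parts $j$ times in each integral $\int\partial^{j}(P)F_{j}\,\tilde{W}R^{\ast}$ produces an operator $\mathcal{D}_2$ satisfying $\langle P\cdot\mathcal{D}_1, R\rangle_{\tilde{W}}=\langle P, R\cdot\mathcal{D}_2\rangle_{W}$ for all $P,R\in\operatorname{Mat}_N(\CC[x])$. This step carries two things to verify: that the boundary terms at $x_{0}$ and $x_{1}$ vanish, which rests on the integrability and the decay of $W$, $\tilde{W}$ and their derivatives at the endpoints; and that $\mathcal{D}_2$ genuinely maps polynomials to polynomials (equivalently, that the coefficients produced by the integration by parts are polynomial rather than merely rational in $W^{-1}$, $\tilde{W}$).

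With the adjoint in hand, orthogonality does the rest. For every $m\ne n$ we have $\langle Q_m, \tilde{Q}_n\cdot\mathcal{D}_2\rangle_{W} = \langle Q_m\cdot\mathcal{D}_1, \tilde{Q}_n\rangle_{\tilde{W}} = A_m\langle\tilde{Q}_m, \tilde{Q}_n\rangle_{\tilde{W}} = 0$. Thus the polynomial $\tilde{Q}_n\cdot\mathcal{D}_2$ is $W$-orthogonal to every $Q_m$ with $m\ne n$; writing it in the basis $\{Q_k\}$ and comparing leading coefficients forces $\deg(\tilde{Q}_n\cdot\mathcal{D}_2)\le n$ and hence $\tilde{Q}_n\cdot\mathcal{D}_2 = B_nQ_n$ for a constant matrix $B_n$. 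Taking $m=n$ gives $A_n\|\tilde{Q}_n\|^2 = \|Q_n\|^2 B_n^{\ast}$, so $B_n$ is nonsingular exactly when $A_n$ is, and in particular $\mathcal{D}_2$ is degree-preserving. Setting $D:=\mathcal{D}_1\mathcal{D}_2$, I obtain $Q_n\cdot D = (Q_n\cdot\mathcal{D}_1)\cdot\mathcal{D}_2 = A_nB_nQ_n$, so $D\in\mathcal{D}(W)$; together with the degree-preservation of $\mathcal{D}_1$ and $\mathcal{D}_2$ and the relation $Q_n\cdot\mathcal{D}_1=A_n\tilde{Q}_n$ from the hypothesis, this is precisely the data of Definition \ref{darb def}, so $\tilde{W}$ is a Darboux transformation of $W$.

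The main obstacle is the second step. Everything downstream is essentially formal once the adjoint relation $\langle P\cdot\mathcal{D}_1, R\rangle_{\tilde{W}}=\langle P, R\cdot\mathcal{D}_2\rangle_{W}$ holds on all polynomials and $\mathcal{D}_2$ preserves polynomials; the delicate points are the vanishing of the boundary contributions in the integration by parts and the polynomiality of the adjoint's coefficients. The former is exactly where the analytic hypotheses on weight matrices (finiteness of all moments and the decay of $W,\tilde{W}$ at $x_{0}$ and $x_{1}$) are used, and the latter can alternatively be bypassed by invoking completeness of the polynomials in $L^2(W)$, which then yields $\tilde{Q}_n\cdot\mathcal{D}_2 = B_nQ_n$ directly from the orthogonality relations above without a separate coefficient analysis.
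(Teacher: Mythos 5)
A point of reference first: the paper does not prove Proposition \ref{darb eq} at all --- it is imported verbatim as Theorem 3.3 of \cite{BPZ24} --- so your attempt can only be measured against the strategy of that cited proof, which, like yours, produces the second factor as the weighted adjoint $\mathcal{D}_2 = \tilde{W}\,\mathcal{D}_1^{\ast}\,W^{-1}$ and recovers $\tilde{Q}_n\cdot\mathcal{D}_2 = B_nQ_n$ with $\|Q_n\|^{2}B_n^{\ast} = A_n\|\tilde{Q}_n\|^{2}$. Your skeleton is the right one: the degree-preservation of $\mathcal{D}_1$ (your basis argument is correct, since $C_nA_n\neq 0$ whenever $C_n \neq 0$ and $A_n$ is invertible, and the paper's definition tolerates finitely many exceptional degrees), the pairing computation, the identification of $B_n$ and its nonsingularity, and the conclusion $D=\mathcal{D}_1\mathcal{D}_2\in\mathcal{D}(W)$ are all sound \emph{modulo} the adjoint step.

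However, the two points you flag as ``delicate'' are not technicalities; they are the entire content of the cited theorem, and your proposal closes neither. First, the vanishing of boundary terms cannot be extracted from ``integrability and decay of $W,\tilde{W}$'': weight matrices in this paper's sense need not decay at finite endpoints (e.g.\ Jacobi-type weights with $\alpha=\beta=0$), and the coefficients $F_j$ of $\mathcal{D}_1$ are arbitrary matrix functions, so the boundary contributions $\partial^{k}(F_j\tilde{W}R^{\ast})$ at $x_0,x_1$ have no reason to vanish without a genuine argument; in \cite{BPZ24}, following the Fourier-algebra formalism of \cite{CY18}, it is exactly the bimodule/adjoint machinery for operators intertwining two orthogonal sequences that delivers the identity $\langle P\cdot\mathcal{D}_1,R\rangle_{\tilde{W}}=\langle P,R\cdot\mathcal{D}_2\rangle_{W}$ on polynomials. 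Second, your fallback for the polynomiality of $\tilde{Q}_n\cdot\mathcal{D}_2$ --- completeness of polynomials in $L^2(W)$ --- is false in general: the definition of a weight matrix here only requires finite moments, and when the underlying moment problem is indeterminate (already in the scalar case, e.g.\ log-normal-type weights on $(0,\infty)$) polynomials are not dense, so orthogonality of $\tilde{Q}_n\cdot\mathcal{D}_2$ to all $Q_m$ with $m\neq n$ does not force $\tilde{Q}_n\cdot\mathcal{D}_2=B_nQ_n$. Since everything downstream (degree-preservation of $\mathcal{D}_2$, membership $D\in\mathcal{D}(W)$, the match with Definition \ref{darb def}) is routed through these two facts, what you have is an accurate outline of the published argument rather than a proof: to complete it you must either prove the adjoint identity and the polynomial stability of $\mathcal{D}_2$ for operators in the Fourier bimodule, or cite those facts from \cite{BPZ24} and \cite{CY18}, as the paper itself does.
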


\subsection{Classical orthogonal polynomials} \label{cl}
We recall some properties of the classical scalar polynomials of Hermite, Laguerre, and Jacobi.

\subsubsection{Shifted Hermite Polynomials}
The monic shifted Hermite polynomials $h_{n}(x-b)$, $b \in \mathbb{R}$, are orthogonal with respect to the weight $w_{b}(x) = e^{-x^{2}+2bx}$ supported on $(-\infty, \infty)$. They have squared norm $\|h_{n}(x-b)\|^{2} = \sqrt{\pi}e^{b^{2}}n!2^{-n}$ and can be expressed using the Rodrigues formula:
$$h_{n}(x) = \frac{(-1)^{n}}{2^{n}}e^{x^{2}}\frac{d^{n}}{dx^{n}}e^{-x^{2}}.$$
The sequence of orthogonal polynomials satisfies the second-order equation 
$$h_{n}(x-b)\cdot (\partial^{2} + \partial (-2(x-b))) = -2nh_{n}(x-b).$$

\subsubsection{Laguerre Polynomials}
The monic Laguerre polynomials $\ell^{(\alpha)}_{n}(x)$, $\alpha > -1$, are orthogonal with respect to the weight $w_{\alpha}(x) = e^{-x}x^{\alpha}$ supported on $(0, \infty)$. Their squared norm is $\|\ell_{n}^{(\alpha)}(x)\|^{2} = n!\Gamma(n+\alpha+1)$ and they can be expressed by the Rodrigues formula:
$$\ell^{(\alpha)}_{n}(x) = (-1)^{n}e^{x}x^{-\alpha}\frac{d^{n}}{dx^{n}}(e^{-x}x^{n+\alpha}).$$
The sequence of orthogonal polynomials satisfies the second-order equation 
$$\ell_{n}^{(\alpha)}(x)\cdot (\partial^{2}x + \partial (\alpha+1-x)) = -n\ell_{n}^{(\alpha)}(x).$$

\subsubsection{Jacobi Polynomials}
The monic Jacobi polynomials $J^{(\alpha,\beta)}_{n}(x)$, $\alpha,\beta > -1$, are orthogonal with respect to the weight $w_{\alpha,\beta}(x) = (1-x)^{\alpha}(1+x)^{\beta}$ supported on $(-1, 1)$. They have squared norm $\|J_{n}^{(\alpha,\beta)}(x)\|^{2} = 2^{2n+\alpha+\beta+1}\, n! \, \frac{\Gamma(n+\alpha+1)\Gamma(n+\beta+1)\Gamma(n+\alpha+\beta+1)}{\Gamma(2n+\alpha+\beta+2)\Gamma(2n+\alpha+\beta+1)}$ and are given by:
$$J^{(\alpha,\beta)}_{n}(x) = \frac{(-1)^{n}}{(n+\alpha+\beta+1)_{n}}(1-x)^{-\alpha}(1+x)^{-\beta}\frac{d^{n}}{dx^{n}}((1-x)^{\alpha+n}(1+x)^{n+\beta}).$$
The sequence of orthogonal polynomials satisfies the second-order equation 
$$J_{n}^{(\alpha,\beta)}(x)\cdot (\partial^{2}(1-x^{2}) + \partial (\beta-\alpha-x(\alpha+\beta+2))) = -n(n+\alpha+\beta+1)J_{n}^{(\alpha,\beta)}(x).$$

\section{Construction of Matrix-Valued Orthogonal Polynomials} \label{S3}
This section aims to present our main theorem. Given a collection of scalar weights supported on the same interval, we construct a weight matrix $W$, and we explicitly give a sequence of matrix-valued orthogonal polynomials for $W$ in terms of the sequences of orthogonal polynomials of the initial scalar weights.

\

Let $w_{j}(x)$ be a scalar weight supported on $(x_{0},x_{1})$, for $j = 1, \ldots, N$. We consider the $N\times N$ weight matrix given by 
\begin{equation}\label{inner1}
    \tilde{W}(x) = \operatorname{diag}(w_{1}(x),\ldots,w_{N}(x)).
\end{equation}

Let $A$ be the nilpotent matrix
\begin{equation}\label{A}
    A = \sum_{j=1}^{[N/2]} a_{2j-1}E_{2j-1,2j} + \sum_{j=1}^{[(N-1)/2]}a_{2j}E_{2j+1,2j}, \quad a_{j} \in \mathbb{R}-\{0 \},
\end{equation} 

$$A = \begin{pmatrix} 0 & a_{1} & 0 & 0 & 0 & 0 & \ldots \\
0 & 0 & 0 & 0 & 0 & 0 & \ldots \\
0 & a_{2} & 0 & a_{3} & 0 & 0 & \ldots \\
0 & 0 & 0 & 0 & 0 & 0 & \ldots \\ 
0 & 0 & 0 & a_{4} & 0 & a_{5} & \ldots \\
0 & 0 & 0 & 0 & 0 & 0 & \ldots \\
\vdots & \vdots & \vdots & \vdots & \vdots & \vdots & \ddots \end{pmatrix}.$$

We introduce the weight matrix $W(x)$ given by 
\begin{equation}\label{B}
    W(x) = T(x)\tilde{W}(x)T(x)^{\ast},
\end{equation}
where $T(x)$ is the exponential matrix $T(x) = e^{Ax} = Ax + I$.

We want to determine a sequence of orthogonal polynomials for $W$ in terms of the monic scalar orthogonal polynomials for $w_{i}$. Let $p_{n}^{w_{j}}(x)$ be the sequence of monic orthogonal polynomials for the scalar weight $w_{j}$. We have that $P_{n}(x) = \operatorname{diag}(p_{n}^{w_{1}}(x), \ldots , p_{n}^{w_{N}}(x))$ is a sequence of monic orthogonal polynomials for the diagonal weight matrix $\tilde{W}$ defined in \eqref{inner1}. The squared norm of $P_{n}(x)$ is given by 
$$\|P_{n}\|^{2} = \langle P_{n}, P_{n} \rangle_{\tilde{W}} = \int_{x_{0}}^{x_{1}} P_{n}(x)\tilde{W}(x)P_{n}(x)^{\ast}dx = \operatorname{diag}(\|p_{n}^{w_{1}}\|^{2},\ldots,\|p_{n}^{w_{N}}\|^{2}),$$
which is an invertible matrix for all $n \geq 0$. With these elements in place, we can now state the following theorem.

\begin{thm}\label{main}
    Let $w_{1}, \ldots, w_{N}$ be scalar weights supported on $(x_{0},x_{1})$. Let $A$ be the nilpotent matrix defined in $\eqref{A}$, $W(x) = T(x)\tilde{W}(x)T(x)^{\ast}$ be the weight matrix defined in \eqref{B}, with $\tilde{W}(x) = \operatorname{diag}(w_{1}(x),\ldots,w_{N}(x))$. Let $p_{n}^{w_{j}}(x)$ be the sequence of monic orthogonal polynomials for $w_{j}(x)$, and $P_{n}(x) = \operatorname{diag}(p_{n}^{w_{1}}(x),\ldots,p_{n}^{w_{N}}(x))$. The following definition introduces a sequence of matrix-valued orthogonal polynomials
    for $W$  
    \begin{equation}
        \begin{split}
            Q_{n}(x) & = P_{n}(x)+AP_{n+1}(x)-\|P_{n}\|^{2}A^{\ast}\|P_{n-1}\|^{-2}P_{n-1}(x)-P_{n}(x)Ax\\
            & \quad +\|P_{n}\|^{2}A^{\ast}\|P_{n-1}\|^{-2}P_{n-1}(x)Ax.
        \end{split}
    \end{equation}
For $n=0$, we take $\|P_{-1}\|^{-2}$ to be $0$, i.e. $Q_{0}(x) = P_{0}(x)+AP_{1}(x)-P_{0}(x)Ax.$
\end{thm}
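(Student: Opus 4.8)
The plan is to exploit the factorization of $Q_n$ through $T(x)$ and reduce the whole problem to the diagonal weight $\tilde W$. The first step is to record the algebraic facts about $A$ that drive everything. Because of the pattern in \eqref{A}, the only nonzero entries of $A$ lie in (odd row, even column) positions; consequently $A^2=0$ (so that indeed $T(x)=e^{Ax}=I+Ax$ and $T(x)^{-1}=I-Ax$), for every diagonal matrix $D$ one has $ADA=0$, and $A^{\ast}DA$ is supported on the even--even block and is positive semidefinite whenever $D\ge 0$, since $v^{\ast}A^{\ast}DAv=(Av)^{\ast}D(Av)$. Writing $B_n=\|P_n\|^2A^{\ast}\|P_{n-1}\|^{-2}$, I would then rewrite the defining expression as $Q_n(x)=\tilde Q_n(x)\,T(x)^{-1}$, where
$$\tilde Q_n(x)=P_n(x)+AP_{n+1}(x)-B_nP_{n-1}(x);$$
this is checked by expanding $\tilde Q_n(x)(I-Ax)$ and using $AP_{n+1}(x)A=0$ (as $P_{n+1}$ is diagonal), which makes $\tilde Q_n(x)Ax=(P_n(x)-B_nP_{n-1}(x))Ax$ and matches the last two terms of the stated formula.

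For orthogonality, the key simplification is that the factors of $T$ cancel in the inner product: with $W=T\tilde W T^{\ast}$ and $Q_k=\tilde Q_kT^{-1}$, the integrand $Q_n(x)W(x)Q_m(x)^{\ast}$ collapses to $\tilde Q_n(x)\tilde W(x)\tilde Q_m(x)^{\ast}$, so that $\langle Q_n,Q_m\rangle_W=\langle\tilde Q_n,\tilde Q_m\rangle_{\tilde W}$. It therefore suffices to show the $\tilde Q_n$ are orthogonal for $\tilde W$. Using $\langle P_j,P_k\rangle_{\tilde W}=\delta_{jk}\|P_j\|^2$ and bilinearity with constant matrix coefficients, overlapping indices occur only for $m=n+1$ and $m=n+2$. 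For $m=n+2$ the single surviving term is proportional to $A^2\|P_{n+2}\|^2=0$. For $m=n+1$ the two surviving terms are $A\|P_{n+1}\|^2$ (from pairing $AP_{n+1}$ with $P_{n+1}$) and $-\|P_n\|^2B_{n+1}^{\ast}$ (from pairing $P_n$ with $-B_{n+1}P_n$), and these cancel because $\|P_n\|^2B_{n+1}^{\ast}=\|P_n\|^2\|P_n\|^{-2}A\|P_{n+1}\|^2=A\|P_{n+1}\|^2$, since the norm matrices are real and diagonal. This cancellation is the structural heart of the construction: the coefficient $B_n$ is chosen precisely to make it happen.

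Finally I must verify that $Q_n$ has degree $n$ with nonsingular leading coefficient, and this I expect to be the main obstacle. Collecting top-degree terms in $\tilde Q_n(x)(I-Ax)$, the $x^{n+2}$ coefficient is $-A^2=0$ and the $x^{n+1}$ coefficient is $-Ac_{n+1}A=0$ (again $ADA=0$, where $c_{n+1}=\operatorname{diag}$ of the subleading scalar coefficients of $P_{n+1}$), so $\deg Q_n\le n$. Writing $d_n$ for the analogous diagonal subleading coefficient of $P_n$, the degree-$n$ coefficient works out to
$$L_n=I+Ac_{n+1}-d_nA+\|P_n\|^2A^{\ast}\|P_{n-1}\|^{-2}A.$$
To prove $L_n$ invertible I would reorder the indices as odd-then-even: the term $Ac_{n+1}-d_nA$ lives in the (odd, even) block, while $\|P_n\|^2A^{\ast}\|P_{n-1}\|^{-2}A$ lives in the (even, even) block, so $L_n$ is block upper triangular,
$$L_n=\begin{pmatrix} I & Ac_{n+1}-d_nA \\ 0 & I+\|P_n\|^2A^{\ast}\|P_{n-1}\|^{-2}A\end{pmatrix}.$$
The lower diagonal block is invertible because $A^{\ast}\|P_{n-1}\|^{-2}A$ is positive semidefinite and $\|P_n\|^2$ is positive definite diagonal, so their product has nonnegative spectrum and adding $I$ keeps every eigenvalue $\ge 1$. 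Hence $\det L_n\neq 0$, and $Q_n$ has degree exactly $n$ with nonsingular leading coefficient; together with the orthogonality this establishes that $\{Q_n\}$ is a sequence of orthogonal polynomials for $W$.
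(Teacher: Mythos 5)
Your proposal is correct, and its skeleton coincides with the paper's own proof: both factor $Q_n(x)=\tilde{Q}_n(x)T(x)^{-1}$ with $\tilde{Q}_n=P_n+AP_{n+1}-B_nP_{n-1}$ (the paper's identity \eqref{QnT}), both reduce orthogonality to the diagonal weight via $\langle Q_n,Q_m\rangle_W=\langle Q_nT,Q_mT\rangle_{\tilde{W}}$, and both exploit that $B_n=\|P_n\|^2A^{\ast}\|P_{n-1}\|^{-2}$ is tuned exactly to cancel the $|n-m|=1$ overlap. The genuine divergence is the nonsingularity of the leading coefficient. The paper conjugates $K_n=I+\|P_n\|^2A^{\ast}\|P_{n-1}\|^{-2}A+C_n$ (your $L_n$; note your $C_n=Ac_{n+1}-d_nA$ matches the paper's) by the unipotent factors $(I+\|P_n\|^2A^{\ast})$, $(I-C_n)$, $(I-\|P_{n-1}\|^{-2}A)$ to reduce to the tridiagonal matrix $I+\|P_n\|^2A^{\ast}-\|P_{n-1}\|^{-2}A$, and then evaluates its determinant by the combinatorial identity of Theorem \ref{det tec} in the Appendix, concluding since all products $\rho_{i_1}\cdots\rho_{i_k}\geq 0$. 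You instead reorder the basis odd-then-even, observe $L_n$ is block upper triangular with diagonal blocks $I$ and $I+\|P_n\|^2A^{\ast}\|P_{n-1}\|^{-2}A$, and invoke the standard fact that a product $PS$ of a positive definite and a positive semidefinite matrix is similar to $P^{1/2}SP^{1/2}$ and hence has nonnegative spectrum; this bypasses the Appendix lemma entirely and is shorter and more conceptual, though it does not yield the explicit determinant formula the paper records. A small bonus of your route: you explicitly dispose of the $m=n+2$ overlap $\langle AP_{n+1},-B_{n+2}P_{n+1}\rangle_{\tilde{W}}=-A^2\|P_{n+2}\|^2=0$, a cross term that the paper's displayed expansion of $\langle Q_n,Q_m\rangle_W$ silently drops (it vanishes for the same reason, $M_1M_2=0$ for $M_1,M_2\in\mathcal{M}$). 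One cosmetic omission: you only treat $m>n$; add that the cases $m<n$ follow from $\langle Q_m,Q_n\rangle_W=\langle Q_n,Q_m\rangle_W^{\ast}$.
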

\begin{proof}
 Let $\mathcal{M}$ be the set
 \begin{equation}\label{set M}
 \mathcal{M} = \left \{  M = \sum_{j=1}^{[N/2]} m_{2j-1}E_{2j-1,2j} + \sum_{j=1}^{[(N-1)/2]}m_{2j}E_{2j+1,2j}, \quad m_{j} \in \mathbb{C} \right \}.
 \end{equation}
We note that the nilpotent matrix $A$ belongs to $\mathcal{M}$. Given a diagonal matrix $D = \operatorname{diag}(d_{1}, \ldots, d_{N})$ and $M \in \mathcal{M}$, it follows that 
\begin{equation}\label{DM}
    \begin{split}
        DM & = \sum_{j=1}^{[N/2]} d_{2j-1}m_{2j-1}E_{2j-1,2j} + \sum_{j=1}^{[(N-1)/2]}d_{2j+1}m_{2j}E_{2j+1,2j} \in \mathcal{M}, \\
        MD & = \sum_{j=1}^{[N/2]} d_{2j}m_{2j-1}E_{2j-1,2j} + \sum_{j=1}^{[(N-1)/2]}d_{2j}m_{2j}E_{2j+1,2j} \in \mathcal{M}.
    \end{split}
\end{equation}
Besides, it is easy to check that 
\begin{equation}\label{MM}
    M_{1}M_{2} = 0 \quad \text{for all} \quad  M_{1}, M_{2} \in \mathcal{M}.
\end{equation}

By \eqref{DM} we obtain that 
\begin{equation*}
    \begin{split}
        AP_{n+1}(x) - P_{n}(x)Ax & = \sum_{j=1}^{[N/2]} a_{2j-1}(p_{n+1}^{w_{2j}}(x)-p_{n}^{w_{2j-1}}(x)x)E_{2j-1,2j} \\
        & \quad + \sum_{j=1}^{[(N-1)/2]}a_{2j}(p_{n+1}^{w_{2j}}(x)-p_{n}^{w_{2j+1}}(x)x)E_{2j+1,2j}.
    \end{split}
\end{equation*}
 Since $p_{n}^{w_{j}}$ is a monic polynomial, we have that $AP_{n+1}(x) - P_{n}(x)Ax$ is of degree less than or equal to $n$. We write $AP_{n+1}(x) - P_{n}(x)Ax = \sum_{j=0}^{n}C_{j}x^{j}$, with $C_{j} \in \mathcal{M}$. Then, the leading coefficient of $Q_{n}(x)$ is given by $$K_{n} = I + \|P_{n}\|^{2}A^{\ast}\|P_{n-1}\|^{-2}A + C_{n}.$$

We need to show that $K_{n}$ is nonsingular for all $n\geq 0$. First, note that if a matrix $B$ satisfies that $B^{2} =0$, then the matrix $I + B$ is nonsingular because $(I+B)(I-B) = I$. Then, $K_{n}$ is nonsingular if and only if $(I+\|P_{n}\|^{2}A^{\ast})K_{n}(I-C_{n})(I-\|P_{n-1}\|^{-2}A) = I + \|P_{n}\|^{2}A^{\ast} - \|P_{n-1}\|^{-2}A$ is nonsingular.

We have that 
$$ I + \|P_{n}\|^{2}A^{\ast} - \|P_{n-1}\|^{-2}A = \begin{psmallmatrix} 1 & -a_{1}\|p_{n-1}^{w_{1}}\|^{-2} & 0 & 0 & 0 & 0 & \cdots \\
a_{1} \| p_{n}^{w_{2}}\|^{2} & 1 & a_{2}\|p_{n}^{w_{2}}\|^{2} & 0 & 0 & 0 & \cdots \\
0 & -a_{2}\|p_{n-1}^{w_{3}}\|^{-2} & 1 & -a_{3} \| p_{n-1}^{w_{3}}\|^{-2} & 0 & 0 & \cdots \\
0 & 0 & a_{3}\|p_{n}^{w_{4}}\|^{2} & 1 & a_{4} \|p_{n}^{w_{4}}\|^{2} & 0 & \cdots \\
0 & 0 & 0 & -a_{4} \| p_{n-1}^{w_{5}}\|^{-2} & 1 & -a_{5}\|p_{n-1}^{w_{5}}\|^{-2} & \cdots \\
\vdots & \vdots & \vdots & \vdots & \ddots & \ddots & \ddots
\end{psmallmatrix}.$$
By Theorem \ref{det tec} in the Appendix, it follows that 
$$\det( I + \|P_{n}\|^{2}A^{\ast} - \|P_{n-1}\|^{-2}A ) = 1+\sum_{k=1}^{[(N+1)/2]} \sum_{1\leq i_{1}<i_{2}<\ldots<i_{k}\leq N-1}\rho_{i_{1}}\cdots \rho_{i_{k}},$$
where $\rho_{1} = a_{1}^{2}\frac{\|p_{n}^{w_{2}}\|^{2}}{\|p_{n-1}^{w_{1}}\|^{2}}, \rho_{2} = a_{2}^{2} \frac{\|p_{n}^{w_{2}}\|^{2}}{\|p_{n-1}^{w_{3}}\|^{2}}, \rho_{3} = a_{3}^{2} \frac{\|p_{n}^{w_{4}}\|^{2}}{\|p_{n-1}^{w_{3}}\|^{2}}, \ldots, \rho_{i} = a_{i}^{2} \frac{\|p_{n}^{w_{2[(i+1)/2]}}\|^{2}}{\|p_{n-1}^{w_{2[i/2]+1}}\|^{2}}$. Since $\rho_{i} \geq 0$ for all $1 \leq i \leq N-1$, it follows that $\det( I + \|P_{n}\|^{2}A^{\ast} - \|P_{n-1}\|^{-2}A ) \not= 0$. Thus, $K_{n}$ is nonsingular for all $n\geq 0$.

Finally, we need to show the orthogonality. By \eqref{DM} and \eqref{MM}, it follows that 
\begin{equation} \label{QnT}
    Q_{n}(x)(I+Ax) = P_{n}(x) +AP_{n+1}(x) - \|P_{n}\|^{2}A^{\ast}\|P_{n-1}\|^{-2}P_{n-1}(x).
\end{equation}

Let $n\not= m$, we have that 
\begin{equation*}
        \begin{split}
            \langle Q_{n},Q_{m} \rangle_{W} & = \langle Q_{n}T, Q_{m}T \rangle_{\tilde{W}}  \\
            & = \langle P_{n}, P_{n}\rangle_{\tilde{W}}+ A\langle P_{n+1}, P_{m} \rangle_{\tilde{W}}-\langle P_{n}, P_{m-1}\rangle_{\tilde{W}} \|P_{m-1}\|^{-2}A\|P_{m}\|^{2} \\ 
            & \quad + \langle P_{n},P_{m+1} \rangle_{\tilde{W}} A^{\ast}-\|P_{n}\|^{2}A^{\ast}\|P_{n-1}\|^{-2}\langle P_{n-1}, P_{m}\rangle_{\tilde{W}} + A\langle P_{n+1}, P_{m+1} \rangle_{\tilde{W}} A^{\ast} \\
            & \quad  + \|P_{n}\|^{2}A^{\ast}\|P_{n-1}\|^{-2}\langle P_{n-1}, P_{m-1}\rangle_{\tilde{W}} \|P_{m-1}\|^{-2}A\|P_{m}\|^{2}.
        \end{split}
    \end{equation*}

 From here, by the orthogonality of $P_{n}$ with respect to $\langle \cdot \, , \cdot \rangle_{\tilde{W}}$, we obtain that
    \begin{equation} \label{inner}
        \begin{split}
            \langle Q_{n},Q_{m}\rangle_{W} & = A\langle P_{n+1},P_{m} \rangle_{\tilde{W}} - \langle P_{n}, P_{m-1} \rangle_{\tilde{W}}\|P_{m-1}\|^{-2}A\|P_{m}\|^{2} \\
            & \quad + \langle P_{n},P_{m+1} \rangle_{\tilde{W}} A^{\ast} - \|P_{n}\|^{2}A^{\ast}\|P_{n-1}\|^{-2}\langle P_{n-1},P_{m} \rangle_{\tilde{W}}.
        \end{split}
    \end{equation}
To conclude the proof, we need to consider three cases: $n = m+1$, $n = m-1$, and $n \neq m+1$ and $n \neq m-1$. In all three cases, it can be straightforwardly concluded that \eqref{inner} is equal to 0.

\end{proof}

\begin{prop}
    The squared norm of $Q_{n}$ is given by 
    $$\|Q_{n}\|^{2} = \|P_{n}\|^{2} + A\|P_{n+1}\|^{2}A^{\ast} + \|P_{n}\|^{2}A^{\ast}\|P_{n-1}\|^{-2}A\|P_{n}\|^{2}.$$
\end{prop}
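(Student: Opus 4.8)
The plan is to compute the squared norm directly by passing to the diagonal weight $\tilde{W}$ and then exploiting the orthogonality of the $P_{n}$. First, since $W(x) = T(x)\tilde{W}(x)T(x)^{\ast}$, for any polynomial $Q$ one has $\langle Q, Q\rangle_{W} = \langle QT, QT\rangle_{\tilde{W}}$; in particular $\|Q_{n}\|^{2} = \langle Q_{n}T, Q_{n}T\rangle_{\tilde{W}}$. The key input is identity \eqref{QnT} established in the proof of Theorem \ref{main}, which already records that
$$Q_{n}(x)T(x) = P_{n}(x) + AP_{n+1}(x) - \|P_{n}\|^{2}A^{\ast}\|P_{n-1}\|^{-2}P_{n-1}(x).$$
Writing $B_{n} = \|P_{n}\|^{2}A^{\ast}\|P_{n-1}\|^{-2}$ for brevity, this reads $Q_{n}T = P_{n} + AP_{n+1} - B_{n}P_{n-1}$.

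Next I would expand $\langle Q_{n}T, Q_{n}T\rangle_{\tilde{W}}$ using sesquilinearity together with the rules $\langle MP, Q\rangle = M\langle P, Q\rangle$ and $\langle P, MQ\rangle = \langle P, Q\rangle M^{\ast}$ valid for constant matrices $M$. This produces nine terms. Since $P_{n}$ is a sequence of orthogonal polynomials for $\tilde{W}$, every inner product $\langle P_{i}, P_{j}\rangle_{\tilde{W}}$ with $i \neq j$ vanishes; as the three summands of $Q_{n}T$ involve $P_{n}$, $P_{n+1}$, and $P_{n-1}$ with pairwise distinct indices, all six cross terms are zero and only the three diagonal terms survive:
$$\|Q_{n}\|^{2} = \langle P_{n}, P_{n}\rangle_{\tilde{W}} + A\langle P_{n+1}, P_{n+1}\rangle_{\tilde{W}}A^{\ast} + B_{n}\langle P_{n-1}, P_{n-1}\rangle_{\tilde{W}}B_{n}^{\ast} = \|P_{n}\|^{2} + A\|P_{n+1}\|^{2}A^{\ast} + B_{n}\|P_{n-1}\|^{2}B_{n}^{\ast}.$$

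Finally I would simplify the last term. Since $\|P_{n-1}\|^{2} = \operatorname{diag}(\|p_{n-1}^{w_{1}}\|^{2},\ldots,\|p_{n-1}^{w_{N}}\|^{2})$ is a real diagonal, hence Hermitian, matrix, one has $(\|P_{n-1}\|^{-2})^{\ast} = \|P_{n-1}\|^{-2}$ and $(\|P_{n}\|^{2})^{\ast} = \|P_{n}\|^{2}$, so that $B_{n}^{\ast} = \|P_{n-1}\|^{-2}A\|P_{n}\|^{2}$. Substituting and cancelling $\|P_{n-1}\|^{-2}\|P_{n-1}\|^{2} = I$ gives
$$B_{n}\|P_{n-1}\|^{2}B_{n}^{\ast} = \|P_{n}\|^{2}A^{\ast}\|P_{n-1}\|^{-2}A\|P_{n}\|^{2},$$
which yields the claimed formula (for $n=0$ the convention $\|P_{-1}\|^{-2}=0$ makes $B_{0}=0$ and the last term vanishes, consistently). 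The only point requiring any care is the index bookkeeping that guarantees the vanishing of the six cross terms; once \eqref{QnT} is invoked every step is elementary, so I do not anticipate a genuine obstacle.
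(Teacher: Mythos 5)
Your proposal is correct and follows exactly the paper's own argument: the paper likewise computes $\|Q_{n}\|^{2}=\langle Q_{n}(I+Ax),Q_{n}(I+Ax)\rangle_{\tilde{W}}$ via the identity \eqref{QnT} and the orthogonality of the $P_{n}$, with the paper merely leaving the nine-term expansion and the simplification of $B_{n}\|P_{n-1}\|^{2}B_{n}^{\ast}$ implicit. Your write-up simply makes these routine steps (vanishing of the six cross terms, Hermitianness of the diagonal norm matrices) explicit.
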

\begin{proof}
    We have that 
    $$\|Q_{n}\|^{2} = \langle Q_{n}(I+Ax),Q_{n}(I+Ax)\rangle_{\tilde{W}}.$$
    The result follows directly by computing the inner product and using the expression $Q_{n}(x)(I+Ax) = P_{n}(x) +AP_{n+1}(x) - \|P_{n}\|^{2}A^{\ast}\|P_{n-1}\|^{-2}P_{n-1}(x).$
\end{proof}

Explicitly, the expression of the weight matrix and the sequence of orthogonal polynomials for $N = 2$, and $N = 3$, are given by 

\ 

\textbf{$2\times 2$ cases:}
\begin{equation} \label{W 2}
W(x) = \begin{pmatrix} w_{1}(x)+a^{2}x^{2}w_{2}(x) & axw_{2}(x) \\ 
axw_{2}(x) & w_{2}(x) \end{pmatrix},
\end{equation}
$$Q_{n}(x) = \begin{pmatrix} p^{w_{1}}_{n}(x) & a (p^{w_{2}}_{n+1}(x) - p^{w_{1}}_{n}(x)x) \\ -a \frac{\|p^{w_{2}}_{n}\|^{2}}{\|p^{w_1}_{n-1}\|^{2}}p^{w_{1}}_{n-1}(x) & a^{2}\frac{\|p^{w_{2}}_{n}\|^{2}}{\|p^{w_{1}}_{n-1}\|^{2}}p^{w_{1}}_{n-1}(x)x+p_{n}^{w_{2}}(x)\end{pmatrix}.$$

\

\textbf{$3\times 3$ cases:}
\begin{equation} \label{W 3}
W(x) = \begin{pmatrix}a_{1}^{2}x^{2}w_{2}(x) + w_{1}(x) & a_{1}xw_{2}(x) & a_{1}a_{2}x^{2}w_{2}(x) \\ a_{1}xw_{2}(x) & w_{2}(x) & a_{2}xw_{2}(x) \\ a_{1}a_{2}x^{2}w_{2}(x) & a_{2}xw_{2}(x) & a_{2}^{2}x^{2}w_{2}(x)+w_{3}(x)\end{pmatrix},
\end{equation}
$$Q_{n}(x) = \begin{psmallmatrix}p_{n}^{w_{1}}(x) & a_{1}(p_{n+1}^{w_{2}}(x)-p_{n}^{w_{1}}(x)x) & 0 \\ -a_{1} \frac{\|p_{n}^{w_{2}}\|^{2}}{\|p_{n-1}^{w_{1}}\|^{2}}p^{w_{1}}_{n-1}(x) & a_{1}^{2} \frac{\|p_{n}^{w_{2}}\|^{2}}{\|p_{n-1}^{w_{1}}\|^{2}}p^{w_{1}}_{n-1}(x)x + p_{n}^{w_{2}}(x) +a_{2}^{2} \frac{\|p_{n}^{w_{2}}\|^{2}}{\|p_{n-1}^{w_{3}}\|^{2}}p^{w_{3}}_{n-1}(x)x & -a_{2} \frac{\|p_{n}^{w_{2}}\|^{2}}{\|p_{n-1}^{w_{3}}\|^{2}}p^{w_{3}}_{n-1}(x) \\ 0 & a_{2}(p^{w_{2}}_{n+1}(x)-p_{n}^{w_{3}}(x)x) & p_{n}^{w_{3}}(x)   \end{psmallmatrix}.$$

\section{Bispectrality and Darboux transformation}\label{S4}
This section focuses on the bispectrality of the matrix orthogonal polynomials. We state a sufficient condition on the scalar polynomials to ensure that the sequence of matrix orthogonal polynomials for a weight matrix \( W \), as defined in \eqref{B}, is an eigenfunction of a differential operator. We develop examples constructed from classical scalar weights and obtain new families of bispectral orthogonal polynomials. Additionally, as regards the classification of the Matrix Bochner Problem in \cite{CY18}, we analyze whether the constructed weights arise as Darboux transformations of diagonal scalar weights.

\begin{thm}\label{bisp}
Let $p^{w_i}_n(x)$ be the sequence of monic orthogonal polynomials associated with the scalar weight \( w_i \), for \( i = 1, \dots, N \). Consider the nilpotent matrix \( A \) as defined in \eqref{A} and the polynomial matrix \( T(x) = e^{Ax} \). Assume that the sequence \( p^{w_i}_n(x) \) is an eigenfunction of a differential operator \( \delta_i \), satisfying  
\[
p^{w_i}_n(x) \cdot \delta_i = \Lambda_n(\delta_i) p^{w_i}_n(x),
\]  
with the condition 

\begin{equation} \label{condition}
    \begin{split}
        \Lambda_{n}(\delta_{2i-1}) & = \Lambda_{n+1}(\delta_{2i}), \text{ for all } 1 \leq i \leq [N/2],\\
        \Lambda_{n+1}(\delta_{2i}) & = \Lambda_{n}(\delta_{2i+1}), \text{ for all } 1 \leq i \leq [(N-1)/2].
    \end{split}
\end{equation}
Then, the sequence of matrix-valued orthogonal polynomials \( Q_n(x) \) constructed in Theorem 3.1 for the weight matrix $W(x) = T(x) \operatorname{diag}(w_1, \dots, w_N) T(x)^*$ 
is an eigenfunction of the differential operator $
D = T(x) \operatorname{diag}(\delta_1, \dots, \delta_N) T(x)^{-1}$,
$$Q_{n}(x) \cdot D = \operatorname{diag}(\Lambda_{n}(\delta_{1}),\ldots,\Lambda_{n}(\delta_{N}))Q_{n}(x).$$
\end{thm}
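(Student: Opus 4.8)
The plan is to reduce the claim to a single identity for the auxiliary polynomials $\hat{Q}_{n}(x) := Q_{n}(x)T(x)$, and then to verify that identity using the scalar eigenvalue equations together with the hypothesis \eqref{condition}. Write $\delta := \operatorname{diag}(\delta_{1},\ldots,\delta_{N})$ and $\Lambda_{n} := \operatorname{diag}(\Lambda_{n}(\delta_{1}),\ldots,\Lambda_{n}(\delta_{N}))$. Interpreting the conjugation $D = T\delta T^{-1}$ in the right-action algebra means that $P(x)\cdot D = \big((P(x)T(x))\cdot \delta\big)T(x)^{-1}$ for every matrix function $P$; since $T^{-1}=I-Ax$ is polynomial, $D$ is indeed a differential operator with polynomial coefficients. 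By \eqref{QnT} we have $Q_{n}(x)\cdot T = Q_{n}(x)T(x) = \hat{Q}_{n}(x)$, where
\[
\hat{Q}_{n}(x) = P_{n}(x) + AP_{n+1}(x) - \|P_{n}\|^{2} A^{\ast}\|P_{n-1}\|^{-2}P_{n-1}(x).
\]
Consequently $Q_{n}(x)\cdot D = \big(\hat{Q}_{n}(x)\cdot \delta\big)T(x)^{-1}$, and since $\Lambda_{n} Q_{n}(x) = \Lambda_{n} \hat{Q}_{n}(x) T(x)^{-1}$, it suffices to prove the single identity $\hat{Q}_{n}(x)\cdot\delta = \Lambda_{n}\hat{Q}_{n}(x)$.

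First I would record that the diagonal polynomial $P_{n}(x)=\operatorname{diag}(p^{w_{1}}_{n},\ldots,p^{w_{N}}_{n})$ satisfies $P_{n}(x)\cdot\delta = \Lambda_{n} P_{n}(x)$; this is immediate since both $P_{n}$ and $\delta$ are diagonal, so the right-action decouples into the $N$ scalar equations $p^{w_{i}}_{n}\cdot\delta_{i} = \Lambda_{n}(\delta_{i})p^{w_{i}}_{n}$. Because constant matrices factor out of the right action of a differential operator, applying $\delta$ termwise to $\hat{Q}_{n}$ gives
\[
\hat{Q}_{n}(x)\cdot\delta = \Lambda_{n} P_{n}(x) + A\,\Lambda_{n+1}P_{n+1}(x) - \|P_{n}\|^{2}A^{\ast}\|P_{n-1}\|^{-2}\Lambda_{n-1}P_{n-1}(x),
\]
while
\[
\Lambda_{n}\hat{Q}_{n}(x) = \Lambda_{n} P_{n}(x) + \Lambda_{n} A\,P_{n+1}(x) - \Lambda_{n}\|P_{n}\|^{2}A^{\ast}\|P_{n-1}\|^{-2}P_{n-1}(x).
\]
The $P_{n}$-terms agree, so matching the coefficients of $P_{n+1}$ and $P_{n-1}$, and using that the diagonal matrices $\Lambda_{n}$, $\|P_{n}\|^{2}$, $\|P_{n-1}\|^{-2}$, $\Lambda_{n-1}$ commute among themselves, reduces the desired identity to the two matrix relations $A\,\Lambda_{n+1} = \Lambda_{n} A$ and $A^{\ast}\,\Lambda_{n-1} = \Lambda_{n} A^{\ast}$.

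The final step is to verify these two relations entrywise from the structure of $A$ in \eqref{A} and the hypothesis \eqref{condition}. Since $\Lambda_{m}$ is diagonal, right multiplication by $\Lambda_{n+1}$ scales columns and left multiplication by $\Lambda_{n}$ scales rows; hence $A\Lambda_{n+1}=\Lambda_{n} A$ holds if and only if $\Lambda_{n+1}(\delta_{\ell})=\Lambda_{n}(\delta_{k})$ at each nonzero entry $A_{k\ell}$. These entries sit at $(k,\ell)=(2j-1,2j)$ and $(k,\ell)=(2j+1,2j)$, so the constraints are exactly $\Lambda_{n}(\delta_{2j-1})=\Lambda_{n+1}(\delta_{2j})$ and $\Lambda_{n}(\delta_{2j+1})=\Lambda_{n+1}(\delta_{2j})$, i.e. precisely \eqref{condition}. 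The relation $A^{\ast}\Lambda_{n-1}=\Lambda_{n} A^{\ast}$ is handled identically: the nonzero entries of $A^{\ast}$ lie at $(2j,2j-1)$ and $(2j,2j+1)$, and the resulting constraints are \eqref{condition} with $n$ replaced by $n-1$. For $n=0$ the $P_{-1}$-term is absent and only the first relation is needed. I expect the only real care to be this index bookkeeping — matching the two staircase families of nonzero entries of $A$ (and of $A^{\ast}$) to the two families of equalities in \eqref{condition}, while keeping track of which diagonal factors commute; the operator-theoretic reduction and the linearity steps are routine.
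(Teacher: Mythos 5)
Your proof is correct and takes essentially the same route as the paper: both reduce via the identity \eqref{QnT} for $Q_{n}(x)T(x)$, apply the diagonal operator $\operatorname{diag}(\delta_{1},\ldots,\delta_{N})$ termwise, and reduce to the commutation relations $A\Lambda_{n+1}=\Lambda_{n}A$ and $\|P_{n}\|^{2}A^{\ast}\|P_{n-1}\|^{-2}\Lambda_{n-1}=\Lambda_{n}\|P_{n}\|^{2}A^{\ast}\|P_{n-1}\|^{-2}$, which the paper handles through its observation that $BM=MC$ for staircase matrices forces exactly the eigenvalue equalities \eqref{condition}. Your entrywise verification, the explicit $n\mapsto n-1$ shift in the $A^{\ast}$ relation, and the treatment of the $n=0$ case merely spell out details the paper leaves implicit.
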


\begin{proof}
    Let $P_{n}(x) = \operatorname{diag}(p_{n}^{w_{1}}(x), \dots, p_{n}^{w_{N}}(x))$,
    and let \( \tilde{D} = \operatorname{diag}(\delta_{1}, \dots, \delta_{N}) \). By hypothesis, the sequence $P_{n}(x)$ satisfy the equation
    \[ P_{n}(x) \cdot \tilde{D} = \Lambda_{n}(\tilde{D}) P_{n}(x), \]
    where \( \Lambda_{n}(\tilde{D}) = \operatorname{diag}(\Lambda_{n}(\delta_{1}), \dots, \Lambda_{n}(\delta_{N})) \). 
    
    Now, we make the following observation. For a nilpotent matrix
    \[ M = \sum_{j=1}^{[ N/2 ]} m_{2j-1}E_{2j-1,2j} + \sum_{j=1}^{[ (N-1)/2 ]} m_{2j}E_{2j+1,2j}, \]
    where \( m_j \neq 0 \) for all \( j \), and for the diagonal matrices \( B = \operatorname{diag}(b_{1}, \dots, b_{N}) \) and \( C = \operatorname{diag}(c_{1}, \dots, c_{N}) \), it follows that \( BM = MC \) if and only if
\begin{equation*} 
        b_{2j-1} = c_{2j}, \text{ for all } 1 \leq j \leq [N/2], \text{ and } c_{2j}  = b_{2j+1}, \text{ for all } 1 \leq j \leq [(N-1)/2].
\end{equation*} 
    Using the identity \eqref{QnT}, we compute
     \begin{equation}\label{aaa}
         \begin{split}
             Q_{n}(x) \cdot T(x)\tilde{D} & = (P_{n}(x) + AP_{n+1}(x) - \|P_{n}\|^{2}A^{\ast}\|P_{n-1}\|^{-2}P_{n-1}(x)) \cdot \tilde{D} \\
             & =\Lambda_{n}(\tilde{D})P_{n}(x) + A\Lambda_{n+1}(\tilde{D})P_{n+1}(x) - \|P_{n}\|^{2}A^{\ast}\|P_{n-1}\|^{-2}\Lambda_{n-1}(\tilde{D})P_{n-1}(x).
         \end{split}
     \end{equation}
    
    Since the hypothesis states that the eigenvalues satisfy \eqref{condition}, it follows from our previous observation that
    \[ A \Lambda_{n+1}(\tilde{D}) = \Lambda_{n}(\tilde{D}) A, \quad \text{and} \quad \|P_{n}\|^{2} A^{\ast} \|P_{n-1}\|^{-2} \Lambda_{n-1}(\tilde{D}) = \Lambda_{n}(\tilde{D}) \|P_{n}\|^{2} A^{\ast} \|P_{n-1}\|^{-2}. \]
    Substituting these into \eqref{aaa}, we obtain
    \[ Q_{n}(x) \cdot T(x) \tilde{D} = \Lambda_{n}(\tilde{D}) Q_{n}(x) T(x), \]
    which completes the proof.
\end{proof}

In the following subsections, we use the Theorem \ref{bisp} to construct bispectral matrix orthogonal polynomials with the classical scalar polynomials of Laguerre, Hermite, and Jacobi.

\subsection{Laguerre-type}
Let $\alpha_{1},\ldots,\alpha_{N} > -1$. We consider the scalar Laguerre weights $w_{i}(x) = e^{-x}x^{\alpha_{i}}$, and we construct the weight matrix $W$ as in \eqref{B},
$$W(x) = T(x)\operatorname{diag}(e^{-x}x^{\alpha_{1}},\ldots, e^{-x}x^{\alpha_{N}})T(x)^{\ast}.$$
Let $\delta_{\alpha} = \partial^{2}x + \partial(\alpha+1-x)$ be the second-order differential operator of Laguerre. The eigenvalue of $\delta_{\alpha}$ is $\Lambda_{n}(\delta_{\alpha}) = -n$. By considering the operators \( \delta_{\alpha_{2j-1}} \) and \( \delta_{\alpha_{2j}} + 1 \), the eigenvalue condition \eqref{condition} is satisfied. Thus, applying Theorem \ref{bisp}, it follows that \( Q_n(x) \) is an eigenfunction of the second-order differential operator $
D = T(x) \tilde{D} T(x)^{-1}$, where $\tilde{D} = \operatorname{diag}(\delta_{\alpha_{1}},\ldots,\delta_{\alpha_{N}}) + \sum_{j=0}^{[N/2]}E_{2j,2j}$. This proves that \( Q_n(x) \) is bispectral.
The explicit expression of $D$ is given by
$$D = \partial ^2 x I+ \partial \Big( B_L-xI+2xA+x[A,B_L]\Big) + AB_L+ K,$$
where
$$B_L=\textstyle \sum_{j=1}^{N} (\alpha_j+1) E_{j,j},  \, \text{ and } \, K=\sum_{j=1}^{[N/2]} E_{2j,2j}.$$

Explicitly, for $N = 2$, we take the scalar Laguerre weights $w_{\alpha}(x) = e^{-x}x^{\alpha}$, and $w_{\beta}(x) = e^{-x}x^{\beta}$, for $\alpha,\beta > -1$. We have the weight matrix $W$ given by 
$$W(x) = e^{-x}\begin{pmatrix}x^{\alpha}+a^{2}x^{\beta+2} & ax^{\beta+1} \\ ax^{\beta+1} & x^{\beta}\end{pmatrix}.$$
The sequence of orthogonal polynomials $Q_{n}$ for $W$ is given by
$$Q_{n}(x) = \begin{pmatrix} \ell_{n}^{(\alpha)}(x) & a (\ell_{n+1}^{(\beta)}(x) - \ell_{n}^{(\alpha)}(x)x) \\ -a n \frac{\Gamma(n+\beta+1)}{\Gamma(n+\alpha)}\ell_{n-1}^{(\alpha)}(x) & a^{2} n \frac{\Gamma(n+\beta+1)}{\Gamma(n+\alpha)}\ell_{n-1}^{(\alpha)}(x)x + \ell^{(\beta)}_{n}(x)\end{pmatrix}$$
where $\ell_{n}^{(\alpha)}(x)$ and $\ell_{n}^{(\beta)}(x)$ are the sequence of monic orthogonal polynomials for $w_{1}(x)$ and $w_{2}(x)$, respectively. We have that $Q_{n}(x)$ is eigenfunction of the second-order differential operator
$$D = \partial^{2} xI + \partial \begin{pmatrix}\alpha + 1 -x&& ax(2+\beta-\alpha) \\ 0 && \beta + 1 -x  \end{pmatrix} + \begin{pmatrix} 0 && a(\beta+1) \\ 0 && 1 \end{pmatrix},$$
with
$$Q_{n}(x) \cdot D = \begin{pmatrix} -n & 0 \\ 0 & -n+1\end{pmatrix}Q_{n}(x).$$
The sequence $Q_{n}$ satisfies the three-term recurrence relation 
$Q_{n}(x)x = A_{n}Q_{n+1}(x) + B_{n}Q_{n}(x) + C_{n}Q_{n-1}(x),$ where
\begin{equation*}
    \begin{split}
        A_{n} & = \begin{psmallmatrix} 1 & \frac{a(n+\alpha)(\beta-\alpha+2)}{g_{n}(n+\beta+1)a^{2}(n+1)+\alpha+n} \\ 0 &  \frac{a^{2}g_{n}(n+\alpha)n+\alpha+n}{g_{n}(n+\beta+1)a^{2}(n+1)+\alpha+n}\end{psmallmatrix}, \quad C_{n} = \begin{psmallmatrix} \frac{n(g_{n}a^{2}(n+1)(n+\beta+1)+\alpha+n)}{g_{n}a^{2}n+1} & 0 \\ \frac{g_{n}an(\beta-\alpha+2)}{g_{n}a^{2}n+1} & n(n+\beta)\end{psmallmatrix}, \\
        B_{n} & = \begin{psmallmatrix} \frac{g_{n}a^{2}(n+1)(2n+\beta+3)(n+\beta+1)+(2n+\alpha+1)(n+\alpha)}{g_{n}a^{2}(n+1)(n+\beta+1)+\alpha+n} & \frac{a(1+\beta+n(\beta-\alpha+2))}{g_{n}a^{2}n+1} \\ \frac{g_{n}a(1+\beta+n(\beta-\alpha+2))}{g_{n}a^{2}(n+1)(n+\beta+1)+n+\alpha} & \frac{g_{n}a^{2}n(2n+\alpha-1)+\beta+2n+1}{g_{n}a^{2}n+1}\end{psmallmatrix},
    \end{split}
\end{equation*}
where $g_{n} = \frac{\Gamma(n+\beta+1)}{\Gamma(n+\alpha)}$.

\

As regards the Darboux transformation, it was shown in \cite{BP24-1} that if $\alpha_{i} - \alpha_{j} \notin \mathbb{Z}$ for all $i\not= j$, then the weight $W(x) = T(x)\operatorname{diag}(e^{-x}x^{\alpha_{1}},\ldots, e^{-x}x^{\alpha_{N}})T(x)^{\ast}$ is not a Darboux transformation of any diagonal of scalar weights.

On the other hand, at the other extreme, if $\alpha_{i} - \alpha_{j} \in \mathbb{Z}$ for all $i,j$, then the weight $W$ is a Darboux transformation of a direct sum of scalar Laguerre weights, as we will demonstrate in Theorem \ref{darb lag a}. We first establish the following proposition, which will be essential for establishing the result.

\begin{prop}\label{Lag identities}
    Let $r_{1}$, $r_{2}$ be scalar polynomials. Let $k \in \mathbb{Z}$, $m \in \mathbb{Z}$, $\alpha > -1$. Let $\ell_{n}^{(\alpha)}(x)$ be the sequence of monic orthogonal polynomials for the Laguerre weight $w_{\alpha}(x) = e^{-x}x^{\alpha}$. Then, there exists a polynomial $q$ and a differential operator $\tau$ such that 
    $$\ell_{n}^{(\alpha)} \cdot \tau = q(n) \frac{r_{1}(n)}{r_{2}(n)}\ell_{n+m}^{(\alpha+k)}(x).$$
\end{prop}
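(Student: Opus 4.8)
The plan is to build $\tau$ as a composition of elementary shift operators for the Laguerre family, so that $\ell_n^{(\alpha)}\cdot\tau$ is an explicit polynomial-in-$n$ multiple of $\ell_{n+m}^{(\alpha+k)}$, and then to correct the scalar factor to the prescribed form $q(n)\frac{r_1(n)}{r_2(n)}$ using a polynomial in the Laguerre operator $\delta_\alpha$. The whole difficulty is organizational: I must move the pair $(\mathrm{degree},\mathrm{parameter})=(n,\alpha)$ to $(n+m,\alpha+k)$ for arbitrary $m,k\in\mathbb{Z}$ while bookkeeping the accumulated scalar factor.

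First I would record the elementary first- and second-order operators and their effect on the monic Laguerre polynomials. Differentiation is the forward shift, $\ell_n^{(\alpha)}\cdot\partial = n\,\ell_{n-1}^{(\alpha+1)}$, realizing $(n,\alpha)\mapsto(n-1,\alpha+1)$. The backward shift $\ell_n^{(\alpha)}\cdot\big(\partial x+(\alpha-x)\big)=-\ell_{n+1}^{(\alpha-1)}$ realizes $(n,\alpha)\mapsto(n+1,\alpha-1)$; it follows from writing the operator as $\tfrac1{w_\alpha}\tfrac{d}{dx}(w_{\alpha+1}\,\cdot\,)$ and integrating by parts to check orthogonality. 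Both of these move only along the antidiagonal, so they do not suffice on their own. The two extra relations I need are a degree-preserving parameter raise, $\ell_n^{(\alpha)}\cdot(1-\partial)=\ell_n^{(\alpha+1)}$ (immediate from $\ell_n^{(\alpha)}=\ell_n^{(\alpha+1)}+n\,\ell_{n-1}^{(\alpha+1)}$ together with the forward shift), and a parameter-preserving degree lowering, $\ell_n^{(\alpha)}\cdot\big(\partial^2 x+\partial(\alpha+1)\big)=n(n+\alpha)\,\ell_{n-1}^{(\alpha)}$. The last operator equals $\delta_\alpha+\partial x$ and is the crucial one; I would verify it either from the structure relation $x(\ell_n^{(\alpha)})'=n\,\ell_n^{(\alpha)}+n(n+\alpha)\,\ell_{n-1}^{(\alpha)}$ combined with the Laguerre equation, or directly on low degrees, noting that both sides are polynomial identities in $x$ whose coefficients are polynomial in $\alpha$, so they extend to all relevant values of the parameter.

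These four operators realize the four unit shifts $(0,\pm1)$ and $(\pm1,0)$ of $(\mathrm{degree},\mathrm{parameter})$ by non-negative compositions: for instance $(1,0)$ is $(1-\partial)$ followed by the backward shift, and $(0,-1)$ is $\big(\partial^2x+\partial(\alpha+1)\big)$ followed by the backward shift, while $(0,1)$ and $(-1,0)$ are achieved by a single operator. Hence for any $m,k\in\mathbb{Z}$ I can concatenate a suitable word in these operators, choosing a path along which all intermediate parameters stay in $(-1,\infty)$, to obtain an operator $\tau_1$ that is independent of $n$ (the intermediate parameters are fixed integer shifts of $\alpha$, determined by the target, not by $n$) and satisfies $\ell_n^{(\alpha)}\cdot\tau_1 = P_0(n)\,\ell_{n+m}^{(\alpha+k)}$. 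Here $P_0$ is a polynomial in $n$, since each elementary step contributes a factor $1$, $-1$, or a factor of the form $j$ or $j(j+\beta)$ evaluated at the current shifted index (the zeros of $P_0$ at small $n$ account for the values of $n$ where the shift is not possible).

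Finally I would fix the scalar factor. Using $\ell_n^{(\alpha)}\cdot\delta_\alpha=-n\,\ell_n^{(\alpha)}$, the operator $g(\delta_\alpha)$ with $g(t):=r_1(-t)$ satisfies $\ell_n^{(\alpha)}\cdot g(\delta_\alpha)=r_1(n)\,\ell_n^{(\alpha)}$. Prepending it, the operator $\tau:=g(\delta_\alpha)\,\tau_1$ gives $\ell_n^{(\alpha)}\cdot\tau = r_1(n)P_0(n)\,\ell_{n+m}^{(\alpha+k)}$, so setting $q(n):=P_0(n)\,r_2(n)$ yields $\ell_n^{(\alpha)}\cdot\tau = q(n)\tfrac{r_1(n)}{r_2(n)}\,\ell_{n+m}^{(\alpha+k)}$, as claimed; $q$ is genuinely a polynomial because the denominator $r_2$ was simply multiplied back in, while the numerator $r_1$ was produced honestly by $g(\delta_\alpha)$. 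The main obstacle is really the second step, namely recognizing that the standard forward and backward shifts are confined to the antidiagonal and supplying the two independent operators $1-\partial$ and $\partial^2x+\partial(\alpha+1)$ that break out of it; once these are in hand, both the reachability of an arbitrary $(m,k)$ and the bookkeeping of the scalar factor are routine.
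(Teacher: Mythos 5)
Your proof is correct and follows essentially the same route as the paper: compose a fixed ($n$-independent) word in elementary Laguerre ladder identities to carry $(n,\alpha)$ to $(n+m,\alpha+k)$ with an accumulated polynomial factor, then prepend $r_{1}(-\delta_{\alpha})$ to manufacture the factor $r_{1}(n)$ and absorb $r_{2}$ into $q$, exactly as in the paper (which takes $\tau = r_{1}(-\delta_{\alpha})\tilde{\tau}$ and $q = r_{2}\tilde{q}$). The only differences are cosmetic --- you use the primitives $\ell_{n}^{(\alpha)}\cdot(\partial x + \alpha - x) = -\ell_{n+1}^{(\alpha-1)}$ and $\ell_{n}^{(\alpha)}\cdot(\partial^{2}x + \partial(\alpha+1)) = n(n+\alpha)\,\ell_{n-1}^{(\alpha)}$ where the paper instead uses $\ell_{n}^{(\alpha)}\cdot(\partial-1)(x-\partial x + \alpha+1) = \ell_{n+1}^{(\alpha)}$ and $\ell_{n}^{(\alpha)}\cdot(\partial x + \alpha) = (n+\alpha)\,\ell_{n}^{(\alpha-1)}$, an equivalent generating set --- apart from a harmless indexing slip in your justification sketch: the backward-shift operator $\partial x + (\alpha - x)$ is $\frac{1}{w_{\alpha-1}}\frac{d}{dx}\bigl(w_{\alpha}\,\cdot\bigr)$, not $\frac{1}{w_{\alpha}}\frac{d}{dx}\bigl(w_{\alpha+1}\,\cdot\bigr)$.
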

\begin{proof}
We have that the monic orthogonal polynomials of Laguerre satisfy the following identities
    \begin{equation*}
        \begin{split}
           \ell_{n}^{(\alpha)}(x)\cdot (\partial - 1) & = \ell_{n}^{(\alpha+1)}(x), \quad \ell_{n}^{(\alpha)}(x)\cdot (\partial x + \alpha) = (n+\alpha)\ell_{n}^{(\alpha-1)}(x), \\ \ell_{n}^{(\alpha)}(x) \cdot (\partial-1)(x-\partial x + \alpha + 1)  & = \ell_{n+1}^{(\alpha)}(x), \quad  \ell_{n}^{(\alpha)}(x) \cdot \partial = n\ell_{n-1}^{(\alpha+1)}(x), \quad
            \ell_{n}^{(\alpha)}(x)\cdot \delta_{\alpha}  = -n \ell_{n}^{(\alpha)}(x),
        \end{split}
    \end{equation*}
    where $\delta_{\alpha} = \partial^{2}x + \partial (\alpha+1-x)$ is the second-order differential operator of Laguerre.
    By composing the differential operators above, we can construct a differential operator $\tilde{\tau}$ such that 
    $$\ell_{n}^{(\alpha)}(x) \cdot \tilde{\tau} = \tilde{q}(n)\ell_{n+m}^{(\alpha+k)}(x),$$
    for some polynomial $\tilde{q}$. By taking $q(x) = r_{2}(x)\tilde{q}(x)$ and $\tau = r_{1}(-\delta_{\alpha})\tilde{\tau}$ the statement holds.
\end{proof}

\begin{thm}\label{darb lag a}
    If $\alpha_{i}-\alpha_{j} \in \mathbb{Z}$ for all $i,j$, then $W(x) = T(x)\operatorname{diag}(e^{-x}x^{\alpha_{1}},\ldots, e^{-x}x^{\alpha_{N}})T(x)^{\ast}$ is a Darboux transformation of the diagonal of scalar weights $\tilde{W}(x) = \operatorname{diag}(w_{\alpha_{1}(x)},\ldots,w_{\alpha_{N}}(x))$.
\end{thm}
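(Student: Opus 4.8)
The plan is to apply Proposition \ref{darb eq} with the roles of the two weights exchanged: to show that $W$ is a Darboux transformation of $\tilde{W}$, it suffices to exhibit a single ($n$-independent) matrix differential operator $\mathcal{D}_{1}$ together with constant matrices $A_{n}$, nonsingular for all but finitely many $n$, such that
\[
P_{n}(x)\cdot \mathcal{D}_{1} = A_{n}Q_{n}(x), \qquad n \geq 0,
\]
where $P_{n}(x) = \operatorname{diag}(\ell_{n}^{(\alpha_{1})}, \ldots, \ell_{n}^{(\alpha_{N})})$ is the sequence of orthogonal polynomials for $\tilde{W}$ and $Q_{n}$ is the sequence for $W$ from Theorem \ref{main}.

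First I would strip off the factor $T(x)^{-1}$. By \eqref{QnT} we have $Q_{n}(x)T(x) = P_{n} + AP_{n+1} - \|P_{n}\|^{2}A^{\ast}\|P_{n-1}\|^{-2}P_{n-1} =: \hat{Q}_{n}(x)$, so it is enough to find an operator $\mathcal{E}$ and a diagonal matrix $\Xi_{n}$, nonsingular for almost all $n$, with $P_{n}\cdot\mathcal{E} = \Xi_{n}\hat{Q}_{n}$; then $\mathcal{D}_{1} = \mathcal{E}\, m_{T^{-1}}$, the composition of $\mathcal{E}$ with right multiplication by the polynomial matrix $T(x)^{-1} = I - Ax$, is again a differential operator and satisfies $P_{n}\cdot\mathcal{D}_{1} = \Xi_{n}\hat{Q}_{n}T^{-1} = \Xi_{n}Q_{n}$, so that $A_{n} = \Xi_{n}$.

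The operator $\mathcal{E}$ is built entry by entry. Since $P_{n}$ is diagonal, $(P_{n}\cdot\mathcal{E})_{il} = \ell_{n}^{(\alpha_{i})}\cdot\mathcal{E}_{il}$, so in each position I must map the single polynomial $\ell_{n}^{(\alpha_{i})}$ to the corresponding entry of $\Xi_{n}\hat{Q}_{n}$. Reading off $\hat{Q}_{n}$, every entry is a rational-in-$n$ multiple of a single Laguerre polynomial whose degree is shifted by $m \in \{-1,0,+1\}$ and whose parameter differs from $\alpha_{i}$ by an integer: the diagonal gives $\ell_{n}^{(\alpha_{i})}$; the block $AP_{n+1}$ contributes entries $a\,\ell_{n+1}^{(\alpha_{\mathrm{even}})}$; and the block $-\|P_{n}\|^{2}A^{\ast}\|P_{n-1}\|^{-2}P_{n-1}$ contributes entries of the form $-a\,\frac{\|\ell_{n}^{(\cdot)}\|^{2}}{\|\ell_{n-1}^{(\cdot)}\|^{2}}\ell_{n-1}^{(\cdot)}$, whose prefactor is a ratio of Gamma functions with integer-differing arguments, hence a rational function of $n$. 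Because $\alpha_{i}-\alpha_{j}\in\mathbb{Z}$ for all $i,j$, Proposition \ref{Lag identities} supplies, for each such position, a fixed scalar operator realizing the required degree and parameter shift, up to an explicit polynomial factor and an adjustable rational factor. I would then choose, for each row $i$, the common entry $\Xi_{n,ii}$ to be a nonzero polynomial in $n$ large enough to clear all denominators occurring in that row and to supply the extra polynomial factors produced by Proposition \ref{Lag identities}, so that each $\Xi_{n,ii}\cdot(\hat{Q}_{n})_{il}$ becomes exactly an achievable output of that proposition (for the diagonal itself one may simply take $\mathcal{E}_{ii}$ to be a polynomial in $-\delta_{\alpha_{i}}$, using $\ell_{n}^{(\alpha_{i})}\cdot\delta_{\alpha_{i}} = -n\,\ell_{n}^{(\alpha_{i})}$).

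The main obstacle is the bookkeeping of these prefactors: one must verify that a single polynomial $\Xi_{n,ii}$, shared by all entries of a given row, simultaneously absorbs the extra factors produced by Proposition \ref{Lag identities} and clears the Gamma-ratio denominators, while remaining a nonzero polynomial. Granting this, each $\Xi_{n,ii}$ vanishes only at finitely many $n$, so $\Xi_{n}$ is nonsingular for all but finitely many $n$; Proposition \ref{darb eq} then yields that $W$ is a Darboux transformation of $\tilde{W}$.
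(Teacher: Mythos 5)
Your proposal is correct and takes essentially the same route as the paper's proof: both use the identity \eqref{QnT} to reduce the problem to $Q_{n}(x)T(x)$, construct the intertwining operator entry by entry from Proposition \ref{Lag identities} (using that $\alpha_{i}-\alpha_{j}\in\mathbb{Z}$ makes the norm ratios rational functions of $n$), pass back through right multiplication by $T(x)^{-1}$, and conclude via Proposition \ref{darb eq} with a diagonal matrix $A_{n}=\operatorname{diag}(q_{1}(n),\ldots,q_{N}(n))$ of nonzero polynomials in $n$. The row-wise common-factor bookkeeping you flag is exactly what the paper absorbs into the polynomials $q_{i}(n)$ through the $r_{1}/r_{2}$ flexibility built into Proposition \ref{Lag identities}.
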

\begin{proof}
    
We have the sequence of orthogonal polynomials $Q_{n}$ for $W$ given by Theorem \ref{main}. From \eqref{QnT} we have that 
$$Q_{n}(x)T(x) = \begin{psmallmatrix} \ell_{n}^{(\alpha_{1})}(x) & a_{1} \ell_{n+1}^{(\alpha_{2})}(x) & 0 & 0 & \ldots \\ 
-a_{1} \frac{\|\ell_{n}^{(\alpha_{2})}\|^{2}}{\|\ell_{n-1}^{(\alpha_{1})}\|^{2}} \ell_{n-1}^{(\alpha_{1})}(x) & \ell_{n}^{(\alpha_{2})}(x) & -a_{2} \frac{\|\ell_{n}^{(\alpha_{2})}\|^{2}}{\|\ell_{n-1}^{(\alpha_{3})}\|^{2}} \ell_{n-1}^{(\alpha_{3})}(x) & 0 & \ldots \\ 0 & a_{2} \ell_{n+1}^{(\alpha_{2})}(x) & \ell_{n}^{(\alpha_{3})}(x) & a_{3}\ell_{n+1}^{(\alpha_{4})}(x) & \ldots \\ 0 & 0 & -a_{3} \frac{\|\ell_{n}^{(\alpha_{4})}\|^{2}}{\|\ell_{n-1}^{(\alpha_{3})}\|^{2}}\ell_{n-1}^{(\alpha_{3})}(x) & \ell_{n}^{(\alpha_{4})}(x) & \ldots \\
\vdots & \vdots & \vdots & \vdots & \ddots\end{psmallmatrix} = \begin{pmatrix}\vec{v}_{1} \\ \vec{v}_{2} \\ \vec{v}_{3} \\ \vdots \\ \vec{v}_{N} \end{pmatrix}.$$

The rows of $Q_{n}(x)T(x)$ satisfy that 
\begin{equation*}
    \begin{split}
        \vec{v}_{1} & = \begin{psmallmatrix} \ell_{n}^{(\alpha_{1})}(x) && a_{1} \ell_{n+1}^{(\alpha_{2})}(x) && 0 && \ldots && 0 \end{psmallmatrix} \\
        \vec{v}_{2i} & = \begin{psmallmatrix} 0 && \ldots && 0 &&  -a_{2i-1}\frac{\| \ell_{n}^{(\alpha_{2i})}\|^{2}}{\| \ell_{n-1}^{(\alpha_{2i-1})} \|^{2}}
     \ell_{n-1}^{(\alpha_{2i-1})}(x) && \ell_{n}^{(\alpha_{2i})}(x) &&  -a_{2i}\frac{\| \ell_{n}^{(\alpha_{2i})}\|^{2}}{\| \ell_{n-1}^{(\alpha_{2i+1})} \|^{2}}
     \ell_{n-1}^{(\alpha_{2i-1})}(x) && 0 && \ldots && 0 \end{psmallmatrix}, \\ 
     \vec{v}_{2i+1} & = \begin{psmallmatrix} 0 && \ldots && 0 && a_{2i} \ell_{n+1}^{(\alpha_{2i})}(x) && \ell_{n}^{(\alpha_{2i+1})}(x) && a_{2i+1}\ell_{n+1}^{(\alpha_{2i+2})}(x) && 0 && \ldots && 0 \end{psmallmatrix}, \\
     \vec{v}_{N} & = \left\{ \begin{array}{lcc} \begin{psmallmatrix} 0 && \ldots && 0 && -a_{N-1}\frac{\| \ell_{n}^{(\alpha_{N})}\|^{2}}{\|\ell_{n-1}^{(\alpha_{N-1})}\|^{2}}\ell_{n-1}^{(\alpha_{N-1})}(x) && \ell_{n}^{(\alpha_{N})}(x)\end{psmallmatrix}, & \text{if } N \text{ is an even number,}   \\ \\ 
     \begin{psmallmatrix} 0 && \ldots && 0 && a_{N-1}\ell_{n+1}^{(\alpha_{N-1})}(x) && \ell_{n}^{(\alpha_{N})}(x) \end{psmallmatrix}, & \text{if } N \text{ is an odd number.}\end{array} \right.
    \end{split}
\end{equation*}
Since $\alpha_{i} - \alpha_{j} \in \mathbb{Z}$, it follows that $\frac{\|\ell_{n}^{(\alpha_{i})}\|^{2}}{\|\ell_{n-1}^{(\alpha_{j})}\|^{2}}$ is a rational function for all $i,j$. Hence, by Proposition \ref{Lag identities}, we have that there exist polynomials $q_{i}$ and operators $\tau_{i,j}$, $i = 1,\ldots,N$, $j = 1, \ldots,3$ such that 
\begin{equation*}
    \ell_{n}^{(\alpha_{i})}(x) \cdot \begin{pmatrix}0 & \ldots & 0 & \tau_{i,1} & \tau_{i,2} & \tau_{i,3} & 0 & \ldots & 0 \end{pmatrix} = q_{i}(n)\vec{v}_{i}.
\end{equation*}

By arranging these differential operators as rows, we construct a differential operator $\tilde{\mathcal{D}}_{1}$ that satisfies 
$$\operatorname{diag}(\ell_{n}^{(\alpha_{1})}(x), \ldots, \ell_{n}^{(\alpha_{N})}(x)) \cdot \tilde{\mathcal{D}}_{1}T(x)^{-1} = \operatorname{diag}(q_{1}(n),\ldots,q_{N}(n)) Q_{n}(x).$$
Thus, by Proposition \ref{darb eq}, the statement holds.
\end{proof}

\

To illustrate the above situation, we give an explicit example for $N = 5$. We consider the direct sum of classical scalar Laguerre weights
$$\tilde{W}(x) = w_{\alpha}(x) \oplus w_{\alpha}(x) \oplus w_{\alpha+1}(x) \oplus w_{\alpha+1}(x) \oplus w_{\alpha+2}(x).$$
We have the nilpotent matrix
$A = \begin{pmatrix} 0 & a_{1} & 0 & 0 & 0 \\ 0 & 0 & 0 & 0 & 0 \\
0 & a_{2} & 0 & a_{3} & 0 \\
0 & 0 & 0 & 0 & 0 \\
0 & 0 & 0 & a_{4} & 0
\end{pmatrix}$, $a_{1},a_{2},a_{3},a_{4} \in \mathbb{R} - \{ 0 \}$. The weight matrix $W(x) = T(x)\tilde{W}(x)T(x)^{\ast}$, with $T(x) = e^{Ax}$, is given explicitly by

\begin{equation}\label{ej Lag 5}
    W(x) = e^{-x}x^{\alpha}  \left( \begin {array}{ccccc} {a_{1}}^{2}{x}^{2}+1&a_{1}\,x&{x}^
    {2}a_{1}\,a_{2}&0&0\\ \noalign{\medskip}a_{1}\,x&1&a_{2}\,
    x&0&0\\ \noalign{\medskip}{x}^{2}a_{1}\,a_{2}&a_{2}\,x&{{\it 
    a_{3}}}^{2}{x}^{3}+{a_{2}}^{2}{x}^{2}+x&{x}^{2}a_{3}&{x}^{3}a_{3}
    \,a_{4}\\ \noalign{\medskip}0&0&{x}^{2}a_{3}&x&{x}^{2}a_{4}
    \\ \noalign{\medskip}0&0&{x}^{3}a_{3}\,a_{4}&{x}^{2}a_{4}&{a_{4}}^{2}{x}^{3}+{x}^{2}\end {array} \right).
\end{equation}
The sequence of monic orthogonal polynomials for $\tilde{W}$ is given by 
$$P_{n}
(x) = \operatorname{diag}(\ell_{n}^{(\alpha)}(x),\ell_{n}^{(\alpha)}(x),\ell_{n}^{(\alpha+1)}(x),\ell_{n}^{(\alpha+1)}(x),\ell_{n}^{(\alpha+2)}(x)).$$

Let $Q_{n}(x)$ be the sequence of orthogonal polynomials for $W$ given by Theorem \ref{main}. From \eqref{QnT}, we have that
$$Q_{n}(x)T(x) = \begin{psmallmatrix} \ell_{n}^{(\alpha)}(x) & a_{1} \ell_{n+1}^{(\alpha)}(x) & 0 & 0 & 0 \\
-a_{1}n(n+\alpha)\ell_{n-1}^{(\alpha)}(x) & \ell_{n}^{(\alpha)}(x) & -na_{1}\ell_{n-1}^{(\alpha+1)}(x) & 0 & 0 \\ 
0 & a_{2} \ell_{n+1}^{(\alpha)}(x) & \ell_{n}^{(\alpha+1)}(x) & a_{3} \ell_{n+1}^{(\alpha+1)}(x) & 0  \\
0 & 0 & -a_{3}n(n+\alpha+1)\ell_{n-1}^{(\alpha+1)}(x) & \ell_{n}^{(\alpha +1)}(x) & -na_{4}\ell_{n-1}^{(\alpha+2)}(x) \\ 
0 & 0 & 0 & a_{4}\ell_{n+1}^{(\alpha+1)}
(x) & \ell_{n}^{(\alpha+2)}\end{psmallmatrix}.$$
The differential operator 
$$\tilde{\mathcal{D}_{1}} = \begin{psmallmatrix}1 & a_{1}(\partial^{2}x + \partial (\alpha+1-2x) + (x - \alpha - 1)) & 0 & 0 & 0 \\ -a_{1} (\partial^{2} x + \partial (\alpha + 1)) & 1 & -a_{2}\partial & 0 & 0 \\ 0 & -a_{2}(\partial x -(x - \alpha - 1)) & 1 & a_{3}(\partial^{2}x + \partial (\alpha + 2 - 2x ) + (x - \alpha - 2)) & 0 \\ 0 & 0 & -a_{3}(\partial^{2} x + \partial (\alpha + 2)) & 1 & -a_{4} \partial \\ 0 & 0 & 0 & -a_{4} (\partial x - (x- \alpha - 2)) & 1 \end{psmallmatrix}$$
satisfies that 
$$P_{n}(x) \cdot \tilde{\mathcal{D}_{1}} = Q_{n}(x)T(x).$$
Then, by taking $\mathcal{D}_{1} = \tilde{\mathcal{D}}_{1}T(x)^{-1}$ we have $P_{n}(x) \cdot \mathcal{D}_{1} = Q_{n}(x).$
Thus, by Proposition \ref{darb eq}, we have that $W$ is a Darboux transformation of the direct sum of scalar weights $\tilde{W}$. 

In particular, the explicit expression of the operator $D$ that is factored in Definition \ref{darb def} is
\begin{equation*}
    \begin{split}
        D & = \begin{psmallmatrix}a_{1}^{2}\delta_{\alpha}^{2}-(a_{1}^{2}(\alpha+2))(\delta_{\alpha}-1) & 0 & a_{1}a_{2}(\partial - 1)(\delta_{\alpha+1} - 1) & 0 & 0 \\
        0 & 0 & 0 & 0 & 0 \\ a_{1}a_{2}(\partial x + \alpha + 1)(1-\delta_{\alpha}) & 0 & a_{3}^{2}\delta_{\alpha+1}^{2}-(a_{3}^{2}(\alpha+3)+a_{2}^{2})(\delta_{\alpha+1}-1) & 0 & a_{3}a_{4}(\partial - 1)(\delta_{\alpha+2}-1) \\ 0 & 0 & 0 & 0 & 0 \\ 0 & 0 & a_{3}a_{4}(\partial x + \alpha + 2)(1-\delta_{\alpha+1}) & 0 & a_{5}^{2}\delta_{\alpha+2}^{2}-(a_{5}^{2}(\alpha+4)+a_{4}^{2})(\delta_{\alpha+2}-1)  \end{psmallmatrix} \\
        & \quad + \begin{psmallmatrix} 0 & 0 & 0 & 0 & 0 \\ 0 & a_{2}^{2}\delta_{\alpha}^{2}-(a_{1}^{2}\alpha + a_{2}^{2})\delta_{\alpha}+1 & 0 &  a_{2
}a_{3} (\partial - 1)\delta_{\alpha+1} & 0 \\ 0 & 0 & 0 & 0 & 0 \\ 0 & -a_{2}a_{3}(\partial x + \alpha + 1)\delta_{\alpha} & 0 & a_{4}^{2}\delta_{\alpha+1}^{2}-(a_{3}^{2}(\alpha+1)+a_{4}^{2}) \delta_{\alpha+1} +1 & 0  \\ 0 & 0 & 0 & 0 & 0 \end{psmallmatrix},
    \end{split}
\end{equation*}
where $\delta_{\alpha} = \partial^{2}x + \partial (\alpha+1-x)$ is the second-order differential operator of the Laguerre weight of parameter $\alpha$.
We have that the operator $D$ belongs to $\mathcal{D}(\tilde{W})$ (see Theorem 6.7, \cite{BPdarb}), and is factored as $D = \mathcal{D}_{1}\mathcal{D}_{2}$, with $\mathcal{D}_{2} = T(x)(2I-\tilde{\mathcal{D}}_{1})$.

\subsection{Hermite-type}
Let $c_{1},\ldots, c_{N}$ be real numbers. For $i=1,\ldots,N$, we consider the shifted Hermite weight $w_{i}(x) = e^{-x^{2}+2c_{i}x}$. We construct the weight matrix $W$ as in \eqref{B},
$$W(x) = T(x)\operatorname{diag}(e^{-x^{2}+2c_{1}x},\ldots,e^{-x^{2}+2c_{N}x})T(x)^{\ast}.$$
Let $\delta_{c} = \partial^{2} + \partial (-2(x-c))$ be the second-order differential operator for the Hermite weight $w(x) = e^{-x^{2}+2cx}$. The eigenvalue of $\delta_{c}$ is $\Lambda_{n}(\delta_{c}) = -2n$. It follows that $\Lambda_{n}(\delta_{c_{2i-1}}) = \Lambda_{n+1}(\delta_{c_{2i}}+2)$. Therefore, by Theorem \ref{bisp}, we have that the sequence of orthogonal polynomials $Q_{n}(x)$ for $W$ given by Theorem \ref{main} is an eigenfunction of the second-order differential operator $D = T(x)\tilde{D}T(x)^{-1}$, with $\tilde{D} = \operatorname{diag}(\delta_{c_{1}},\ldots, \delta_{c_{N}}) + \sum_{j=1}^{[N/2]}-2E_{2j,2j}.$ Thus, $Q_{n}(x)$ is a bispectral function. 

The differential operator $D$ is given by
$$D = \partial ^2 I+ \partial \Big( 2A+B_H-x+x[A,B_H]\Big) + AB_H+ 2K,$$
where

$$B_H= \textstyle \sum_{j=1}^N 2c_j E_{j,j}, \, \text{ and } \, K=\sum_{j=1}^{[N/2]} E_{2j,2j}.$$

Explicitly, for $N = 2$, we have, $w_{1}(x) = e^{-x^{2}+2bx}$ and $w_{2}(x) = e^{-x^{2}+2cx}$, with $b,c \in \mathbb{R}$. The weight matrix is given by
$$W(x) = e^{-x^{2}}\begin{pmatrix} e^{2bx}+a^{2}x^{2}e^{2cx} & axe^{2cx} \\ axe^{2cx} & e^{2cx}\end{pmatrix},$$
and from Theorem \ref{main} a sequence of orthogonal polynomials for $W$ is given by
$$Q_{n}(x) = \begin{pmatrix} h_{n}(x-b) & a (h_{n+1}(x-c)-h_{n}(x-b)x) \\ -a n\frac{e^{c^{2}-b^{2}}}{2} h_{n-1}(x-b) & a^{2} n \frac{e^{b^{2}-c^{2}}}{2} h_{n-1}(x-b)x + h_{n}(x-c)\end{pmatrix},$$
where $h_{n}(x)$ is the $n$-th monic orthogonal polynomials of the Hermite weight $e^{-x^{2}}$. 
We have that the sequence $Q_{n}$ is an eigenfunction of the second-order differential operator
$$D = \partial^{2} I + \partial \begin{pmatrix} -2x+2b & 2ax(c-b)+2a \\ 0 & -2x +2c\end{pmatrix} + \begin{pmatrix} -2 & 2ac \\ 0 & 0\end{pmatrix},$$
with
$$Q_{n}(x) \cdot D = \begin{pmatrix} -2n-2 & 0 \\ 0 & -2n\end{pmatrix}Q_{n}(x).$$
And, we have that the sequence $Q_{n}$ satisfies the three-term recurrence relation

\begin{equation*}
    \begin{split}
        Q_{n}(x)x & = \begin{psmallmatrix} 1 & - \frac{2ab}{e^{c^{2}-b^{2}}a^{2}(n+1)+2} \\ 0 & \frac{e^{c^{2}-b^{2}}a^{2}n+2}{e^{c^{2}-b^{2}}a^{2}(n+1)+2}\end{psmallmatrix}Q_{n+1}(x) + \begin{psmallmatrix} \frac{2b}{e^{c^{2}-b^{2}}a^{2}(n+1)+2} & \frac{a}{e^{c^{2}-n^{2}}a^{2}n+2} \\ \frac{e^{c^{2}-b^{2}}a}{e^{c^{2}-b^{2}}a^{2}(n+1)+2} & \frac{e^{c^{2}-b^{2}}a^{2}nb+2c}{e^{c^{2}-b^{2}}a^{2}n+2}\end{psmallmatrix}Q_{n}(x) \\
        & \quad + \begin{psmallmatrix} \frac{n(e^{c^{2}-b^{2}}a^{2}(n+1)+2)}{e^{c^{2}-b^{2}}a^{2}n+2} & 0 \\ \frac{ae^{c^{2}-b^{2}}n(c-b)}{e^{c^{2}-b^{2}}a^{2}n+2} & \frac{n}{2}\end{psmallmatrix}Q_{n-1}(x).
    \end{split}
\end{equation*}

An important result proved in \cite{BP24-1}, is that if $c_{i}\not = c_{j}$ for all $i \not = j$, then the weight matrix $W(x) = T(x) \operatorname{diag}(e^{-x^{2}+2c_{1}x},\ldots,e^{-x^{2}+2c_{N}x})T(x)^{\ast}$ is not a Darboux transformation of any direct sum of scalar polynomials.

On the other hand,  if $w_{1}(x) = \cdots = w_{N}(x) = e^{-x^{2}}$,
by Theorem \ref{main}, a sequence of orthogonal polynomials for the $N\times N$ Hermite-type weight $W(x) = T(x)e^{-x^{2}}T(x)^{\ast}$ is given by
$$Q_{n}(x) = h_{n}(x)I+Ah_{n+1}(x) - \frac{n}{2}A^{\ast}h_{n-1}(x) - h_{n}(x)Ax + \frac{n}{2}h_{n-1}(x)A^{\ast}Ax,$$
where $h_{n}(x)$ is the sequence of the monic orthogonal polynomials for the scalar Hermite weight.
We have that $W$ is a Darboux transformation of the direct sum of scalar Hermite weights $e^{-x^{2}}I$. 
In fact, the differential operator 
$$D = (\partial^{2} + \partial (-2x))\frac{-(AA^{\ast} + A^{\ast}A)}{4} + \frac{AA^{\ast}}{2} + I \in \mathcal{D}(e^{-x^{2}}I)$$
can be factored as $D = \mathcal{D}_{1}\mathcal{D}_{2}$, with
\begin{equation*}
    \mathcal{D}_{1} = \partial \left ( \frac{A^{\ast}A}{2}x - \frac{A + A^{\ast}}{2} \right ) + I \text{ and } \mathcal{D}_{2} = \partial \left ( \frac{AA^{\ast}}{2}x + \frac{A + A^{\ast}}{2} \right ) + \frac{AA^{\ast}}{2} + I.
\end{equation*}
Furthermore, $Q_{n}(x) = h_{n}(x) \cdot \mathcal{D}_{1}$. 
As a consequence, the differential operator 
$$\mathcal{D} = \mathcal{D}_{2}\mathcal{D}_{1} = \partial^{2} \left ( - \frac{(A^{\ast}A+AA^{\ast})}{4} \right ) + \partial \left ( \frac{AA^{\ast} + A^{\ast}A}{2}x - \frac{AA^{\ast}A}{2} \right ) + \frac{AA^{\ast}}{2} + I$$
belongs to $\mathcal{D}(W)$.

\subsection{Jacobi-type}
For $i = 1, \ldots, N$, let $\alpha_{i}, \, \beta_{i} > -1$. We take the scalar Jacobi weights $w_{\alpha_{i},\beta_{i}}(x) = (1-x)^{\alpha_{i}}(1+x)^{\beta_{i}}$. 
We have the weight matrix 
$$W(x) = T(x) \operatorname{diag}(w_{\alpha_{1},\beta_{1}}(x),\ldots, w_{\alpha_{N},\beta_{N}}(x))T(x)^{\ast},$$
as defined in \eqref{B}. Let $\delta_{\alpha,\beta} = \partial^{2} (1-x^{2}) + \partial (\beta - \alpha - x(\alpha + \beta +2))$ be the second-order differential operator of Jacobi of parameters $\alpha$ and $\beta$. The eigenvalue of $\delta_{\alpha,\beta}$ is $\Lambda_{n}(\delta_{\alpha,\beta}) = -n(n+\alpha+\beta+1)$. If the parameters $\alpha_{i}, \beta_{i}$ satisfy that $\alpha_j+ \beta_j +1+(-1)^j =\alpha_1+\beta_1$, then the eigenvalues of the operators $\delta_{\alpha_{2i-1},\beta_{2i-1}}$ and $\delta_{\alpha_{2i},\beta_{2i}} + \alpha_{1} + \beta_{1}$ satisfy the condition \eqref{condition}. Then, by Theorem \ref{bisp}, $Q_{n}(x)$ is eigenfunction of the second-order differential operator $D = T(x)\tilde{D}T(x)^{-1}$, with $\tilde{D} = \operatorname{diag}(\delta_{\alpha_{1},\beta_{1}},\ldots,\delta_{\alpha_{N},\beta_{N}}) + \sum_{j=1}^{[N/2]}(\alpha_{1} + \beta_{1})E_{2j,2j}$. The differential operator $D$ is given by
$$D  = \partial ^2 (1-x^2) I+ \partial \Big(2A+B_J + x\big( [A,B_J]+2 K -(\alpha_1+\beta_1+2)I\big)\Big) + AB_J+(\alpha_1+\beta_1)K\,$$
with $B_J=\textstyle \sum_{j=1}^{N} (\beta_{j}-\alpha_j) E_{j,j}, \, \text{ and } \, K=\sum_{j=1}^{[N/2]} E_{2j,2j}.$ 
Thus, with these extra hypotheses on the parameters $\alpha_{i}, \beta_{i}$, we have that $Q_{n}(x)$ is bispectral. 

\ 

Regarding the Darboux transformation, it is proven in \cite{BP24-1} that if $\alpha_{i} - \alpha_{j} \notin \mathbb{Z}$ or $\beta_{i} - \beta_{j} \notin \mathbb{Z}$ for all $i \neq j$, then $W(x) = T(x)\operatorname{diag}(w_{\alpha_{1},\beta_{1}}(x),\ldots, w_{\alpha_{N},\beta_{n}}(x))T(x)^{\ast}$ is not a Darboux transformation of any diagonal scalar weight. On the other hand, we did not find any parameters $\alpha_{i}$ and $\beta_{i}$ such that the weight $W$ is a Darboux transformation of some diagonal of scalar weights.

\

Explicitly, for $N = 2$, we have for the Gegenbauer scalar weights $w_{1}(x) = (1-x^{2})^{r+1}$, $w_{2}(x) = (1-x^{2})^{r}$, $r > -1$, that the weight $W$  is
$$W(x) = (1-x)^{r}\begin{pmatrix}1-x^{2}+ax^{2} & ax \\ ax & 1\end{pmatrix}.$$

From Theorem \ref{main}, a sequence of orthogonal polynomials with respect to $W$ is given by
$$Q_{n}(x) = \begin{pmatrix} p_{n}^{(r+1)}(x) & a(p_{n+1}^{(r)}-p_{n}^{(r+1)}(x)x) \\ 
-a\frac{n}{(n+2r+1)}p_{n-1}^{(r+1)}(x) & a^{2}\frac{n}{(n+2r+1)}p_{n-1}^{(r+1)}(x)x+p_{n}^{(r)} \end{pmatrix},$$
where $p_{n}^{(r+1)}$ and $p_{n}^{(r)}$ are the sequence of monic orthogonal polynomials for $w_{1}$ and $w_{2}$, respectively. 
The sequence $Q_{n}$ is eigenfunction of the second-order differential operator 
$$D= \partial^{2} (1-x^{2})I + \partial  \begin{pmatrix} -x(2r + 4) && 2a \\ 0 && - x(r + 2)  
\end{pmatrix} + \begin{pmatrix} 0 && 0 \\ 0 && 2r+2  \end{pmatrix},$$
we have that 
$$Q_{n}(x) \cdot D = \begin{pmatrix}-n(n+2r+3) & 0 \\0 & -(n-1)(n+2r+2) \end{pmatrix} Q_{n}(x).$$
It also satisfies the three-term recurrence relation 
$$Q_{n}(x)x = A_{n}Q_{n+1}(x)+B_{n}Q_{n}(x)+C_{n}Q_{n-1}(x),$$
where
\begin{equation*}
    \begin{split}
         A_{n} & = \begin{psmallmatrix}
             1 & 0 \\ 0 & \frac{ \left( n
+2\,r+2 \right)  \left( n{a}^{2}+n+2\,r+1 \right) }{ \left( n+2\,r+1
 \right)  \left( n{a}^{2}+{a}^{2}+n+2\,r+2 \right)} \end{psmallmatrix}, \\
 B_{n} & = \begin{psmallmatrix} 0 &  -{\frac { \left( n+2\,r+1 \right) a
 \left( {n}^{2}-2\,{r}^{2}+n-r \right) }{ \left( n+r+1 \right) 
 \left( 2\,n+2\,r+3 \right)  \left( 2\,n+2\,r+1 \right)  \left( n{a}^{
2}+n+2\,r+1 \right) }} \\ {\frac { \left( 2\,r+1
 \right) a}{ \left( n+2\,r+1 \right)  \left( n{a}^{2}+{a}^{2}+n+2\,r+2
 \right) }} & 0 \end{psmallmatrix} \\
 C_{n} & = 
         \begin{psmallmatrix} {\frac {n \left( {a}^{2}{n}^{2}+{a}^{2}nr+n
{a}^{2}+{a}^{2}r+{n}^{2}+3\,nr+2\,{r}^{2}+3\,n+4\,r+2 \right)  \left( 
n+2\,r+1 \right) }{ \left( n+r+1 \right)  \left( 2\,n+2\,r+3 \right) 
 \left( 2\,n+2\,r+1 \right)  \left( n{a}^{2}+n+2\,r+1 \right) }} & 0 \\ 0 & {\frac { \left( n+2\,r \right) n}{ \left( 2\,n+
2\,r+1 \right)  \left( 2\,n+2\,r-1 \right) }} \end{psmallmatrix}.
    \end{split}
\end{equation*}

\subsection{Hermite-Laguerre-type}
 Theorem \ref{main} allows us to construct an innovative sequence of matrix-valued orthogonal polynomials by combining Hermite and Laguerre polynomials. Let $\alpha > -1$, we consider the scalar weights of Hermite $w_{1}(x) = e^{-x^{2}}$, and Laguerre $w_{2}(x) = e^{-x}x^{\alpha}$. We construct the positive semi-definite weight matrix $W(x)$ as in \eqref{B}, 
$$W(x) = T(x) \tilde{W}(x)T(x)^{\ast} =\begin{pmatrix} e^{-x^{2}} + a^{2}x^{2}e^{-x}x^{\alpha}1_{(0,\infty)}(x) & ax e^{-x}x^{\alpha} 1_{(0, \infty)}(x) \\ axe^{-x}x^{\alpha}1_{(0,\infty)}(x) & e^{-x}x^{\alpha}1_{(0,\infty)}(x)\end{pmatrix},$$
where $T(x) = \begin{pmatrix} 1 & ax \\ 0 & 1 \end{pmatrix}$, $\tilde{W}(x) = \begin{pmatrix} e^{-x^{2}} & 0 \\ 0 & e^{-x}x^{\alpha}1_{(0,\infty)}(x) \end{pmatrix}$, and $1_{(0,\infty)}(x)$ is the indicator function of the interval $(0,\infty)$, which takes the value $1$ for $x > 0$ and $0$ otherwise.
By Theorem \ref{main}, a sequence of orthogonal polynomials for $W$ is 
$$Q_{n}(x) = \begin{pmatrix} h_{n}(x) & a(\ell_{n+1}^{(\alpha)}(x) - h_{n}(x)x) \\ -an\frac{2^{n-1}}{\sqrt{\pi}}\Gamma(n+\alpha+1)h_{n-1}(x) & a^{2}n\frac{2^{n-1}}{\sqrt{\pi}}\Gamma(n+\alpha+1)h_{n-1}(x)x + \ell_{n}^{(\alpha)}(x)\end{pmatrix},$$
where $h_{n}(x)$ and $\ell_{n}^{(\alpha)}(x)$ are the $n$-th monic orthogonal polynomials of Hermite and Laguerre, respectively. Let $\delta = \partial^{2} + \partial (-2x)$ and $\delta_{\alpha} = \partial^{2} x + \partial (\alpha +1 -x)$ be the second-order differential operators of Hermite and Laguerre, respectively. It follows that $\Lambda_{n}(\delta-2) = \Lambda_{n+1}(2\delta_{\alpha})$.
Thus, by Theorem \ref{bisp}, the sequence $Q_{n}(x)$ is eigenfunction of the second-order differential operator $D = T(x) \operatorname{diag}(\delta - 2, \delta_{\alpha})T(x)^{-1}$,
$$D = \partial^{2} \begin{pmatrix} 1 & 2ax^{2}-ax \\ 0 & 2x   \end{pmatrix} + \partial \begin{pmatrix} -2x & 2ax(\alpha+3) \\ 0 & 2(\alpha+1-x)\end{pmatrix} + \begin{pmatrix} -2 & 2a(\alpha+1) \\ 0 & 0 \end{pmatrix},$$
with
$$Q_{n}(x) \cdot D = \begin{psmallmatrix} -2n - 2 & 0 \\ 0 & -2n\end{psmallmatrix} Q_{n}(x).$$
On the other hand, the sequence $Q_{n}(x)$ satisfies the following three-term recurrence relation 
$$Q_{n}(x)x = A_{n}Q_{n+1}(x) + B_{n}Q_{n}(x) + C_{n}Q_{n-1}(x),$$
with 
\begin{equation*}
    \begin{split}
        A_{n} & = \begin{psmallmatrix} 1 & \frac{an(2n+3+\alpha)}{2M_{n}a^{2}(n+1)(\alpha+n+1)+n} \\ 0 & \frac{n(M_{n}a^{2}+1)}{2M_{n}a^{2}(n+1)(\alpha+n+1)+n} \end{psmallmatrix}, \quad C_{n} = \begin{psmallmatrix} \frac{2M_{n}a^{2}(n+1)(\alpha+n+1)+1}{2M_{n}a^{2}+2} & 0 \\ \frac{M_{n}a(2n+\alpha+1)}{M_{n}a^{2}+1} & \alpha n + n^{2} \end{psmallmatrix}, \\
        B_{n} & = \begin{psmallmatrix} \frac{2M_{n}a^{2}(n+1)(2n+3+\alpha)(\alpha+n+1)}{2M_{n}a^{2}(n+1)(\alpha+n+1)+n} & \frac{(2\alpha n + 2n^{2} + 2 \alpha + 3n + 2)a}{2(M_{n}a^{2}+1)} \\ \frac{(2\alpha n + 2n^{2} + 2\alpha + 3n + 2) M_{n}a}{2M_{n}a^{2}(n+1)(\alpha+n+1)+n} & \frac{2n+\alpha+1}{M_{n}a^{2}+1}\end{psmallmatrix}.
    \end{split}
\end{equation*}
Where $M_{n} = \frac{n}{\sqrt{\pi}}2^{n-1}\Gamma(n+\alpha+1)$. Thus, the sequence $Q_{n}$ is bispectral.

\section{Irreducibility} \label{S5}
In this section, we study the irreducibility of the weight matrices introduced in \eqref{B} for $N = 2$, for some cases of $N = 3$, and for some particular cases of general size. 

\begin{prop} \label{irr 2}
    The $2 \times 2$ weight matrix $W$ defined in \eqref{W 2} supported on $(x_{0},x_{1})$ reduces to a direct sum of scalar weights if and only if $$w_{1}(x) = -a^{2}w_{2}(x)(b-x)(c-x)$$
    for some $b,\, c \in \mathbb{R}$ with $c>b$, and $(x_{0}, x_{1}) \subseteq (b, \, c)$.

    In particular, if $x_{0} = - \infty$, or $x_{1} = \infty$, then the weight matrix $W$ is irreducible.
\end{prop}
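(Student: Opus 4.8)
The strategy is to detect reducibility through the commutant of $W$ and then read off the constraint it imposes on $w_{1}$ and $w_{2}$. I would use the standard criterion that a weight reduces as a direct sum (for $N=2$, necessarily into two scalar weights) precisely when its commutant
$$\mathcal{C}(W)=\{R\in\operatorname{Mat}_{2}(\mathbb{C}) : R\,W(x)=W(x)\,R^{\ast}\ \text{ for all } x\in(x_{0},x_{1})\}$$
contains a matrix that is not a scalar multiple of the identity. The direction ``reducible $\Rightarrow$ non-scalar $R$'' is immediate: if $W=M\operatorname{diag}(u_{1},u_{2})M^{\ast}$ then $R=ME_{11}M^{-1}$ lies in $\mathcal{C}(W)$ and is non-scalar. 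For the converse I would produce a non-trivial idempotent $E\in\mathcal{C}(W)$ and write $E=MPM^{-1}$ with $P=\operatorname{diag}(1,0)$ in a basis of $\operatorname{Im}(E)\oplus\ker(E)$; then $EW=WE^{\ast}$ turns into $P\,\widehat{W}=\widehat{W}\,P$ for $\widehat{W}=M^{-1}W(M^{\ast})^{-1}$, forcing $\widehat{W}$ to be diagonal, which is the desired reduction. In the case at hand I will also exhibit $M$ explicitly, so this direction is self-contained here.

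The heart of the argument is to compute $\mathcal{C}(W)$. Since $W(x)=T(x)\widetilde{W}(x)T(x)^{\ast}$ with $T(x)=I+Ax$ and $A^{2}=0$, I would pass to the diagonal frame by setting $S(x)=T(x)^{-1}R\,T(x)=(I-Ax)R(I+Ax)$; the defining relation becomes $S(x)\widetilde{W}(x)=\widetilde{W}(x)S(x)^{\ast}$ with $\widetilde{W}=\operatorname{diag}(w_{1},w_{2})$. Writing $R=(r_{ij})$, a direct expansion gives $S_{11}=r_{11}-a x r_{21}$, $S_{22}=r_{22}+a x r_{21}$, $S_{21}=r_{21}$, and $S_{12}=r_{12}+a(r_{11}-r_{22})x-a^{2}r_{21}x^{2}$. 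The diagonal equations force $S_{11},S_{22}$ to be real-valued on the interval, hence $r_{11},r_{22},r_{21}\in\mathbb{R}$; the off-diagonal equation $S_{12}w_{2}=w_{1}\overline{S_{21}}$ then forces $r_{12}\in\mathbb{R}$ and reduces to the single scalar identity
$$r_{21}\,w_{1}(x)=\big(r_{12}+a(r_{11}-r_{22})x-a^{2}r_{21}x^{2}\big)\,w_{2}(x),\qquad x\in(x_{0},x_{1}).$$
If $r_{21}=0$ this collapses, using $w_{2}>0$, to $r_{12}=0$ and $r_{11}=r_{22}$, i.e. $R$ is scalar; so a non-scalar element requires $r_{21}\neq0$, and dividing then gives $w_{1}(x)=\big(-a^{2}x^{2}+\beta x+\gamma\big)w_{2}(x)$ for real constants $\beta,\gamma$. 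Conversely any such relation yields, by reversing the computation, a non-scalar $R\in\mathcal{C}(W)$. Thus $W$ reduces if and only if $w_{1}/w_{2}$ is a quadratic polynomial with leading coefficient $-a^{2}$.

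It remains to impose that $W$ is a genuine weight, which fixes the shape of the quadratic. Writing $-a^{2}x^{2}+\beta x+\gamma=-a^{2}(x-b)(x-c)$, positivity of $w_{1}$ (together with $w_{2}>0$) on $(x_{0},x_{1})$ requires $-a^{2}(x-b)(x-c)>0$ there. As $-a^{2}<0$, a downward parabola with complex or repeated roots is negative except possibly at one point, which is incompatible with positivity on an interval; hence $b,c$ are real and distinct, say $b<c$, and positivity on the whole interval is equivalent to $(x_{0},x_{1})\subseteq(b,c)$. This is exactly the stated condition $w_{1}(x)=-a^{2}w_{2}(x)(b-x)(c-x)$ with $c>b$ and $(x_{0},x_{1})\subseteq(b,c)$. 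For the ``in particular'' claim, reducibility confines the support to the bounded interval $(b,c)$; if $x_{0}=-\infty$ or $x_{1}=\infty$ the support is unbounded and cannot be so contained, so $W$ is irreducible.

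To close the converse concretely I would take $r_{21}=1$ and the traceless representative $R=\begin{psmallmatrix}\tfrac{a(b+c)}{2} & -a^{2}bc\\ 1 & -\tfrac{a(b+c)}{2}\end{psmallmatrix}$, for which $\det R=-\tfrac{a^{2}}{4}(c-b)^{2}<0$; thus $R$ has two distinct real eigenvalues, and the matrix of its eigenvectors serves as the reducing $M$. I expect the main obstacle to be two bookkeeping points rather than any deep difficulty: first, setting up the commutant criterion cleanly (the idempotent-to-block-diagonal step, and checking that a non-scalar commutant element can be taken real before extracting $M$), and second, making the positivity argument precise at the interval endpoints so that it yields exactly $(x_{0},x_{1})\subseteq(b,c)$. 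The coefficient matching itself is routine once the diagonal frame $S(x)=T(x)^{-1}RT(x)$ is used to decouple the entries.
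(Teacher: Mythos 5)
Your proposal is correct and follows essentially the same route as the paper: both detect reducibility through a non-scalar constant matrix $R$ with $RW(x)=W(x)R^{\ast}$ (the paper cites \cite[Theorem 4.3]{TZ18} for its existence, which you instead prove directly), both extract the same scalar identity forcing $w_{1}/w_{2}$ to be a quadratic with leading coefficient $-a^{2}$ whose positivity yields distinct real roots $b<c$ with $(x_{0},x_{1})\subseteq(b,c)$, and both close the converse with an explicit reduction (the paper writes down the conjugating matrix $M$ directly, while you obtain it as the eigenvector matrix of an explicit traceless $R$ with negative determinant). The only stylistic difference is your conjugation to the diagonal frame via $S(x)=T(x)^{-1}RT(x)$, which streamlines the entrywise comparison but does not change the argument.
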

\begin{proof}
    Assume that $W$ reduces to a direct sum of scalar weights. Then there exists a non-scalar $W$-symmetric operator $F \in \mathcal{D}(W)$ of order zero (see \cite[Theorem 4.3]{TZ18}). That is, there exists a constant matrix $F$ in $\operatorname{Mat}_{2}(\mathbb{C})$ such that 
    \begin{equation}\label{eq}
        FW(x) = W(x)F^{\ast}, \quad x\in (x_{0},x_{1}),
    \end{equation}
    with $F \notin \mathbb{C}I$.
    We write $F = \begin{pmatrix} p_{1} && p_{2} \\ p_{3} && p_{4} \end{pmatrix}$. By comparing the entries in the equation \eqref{eq}, it follows that $p_{1}$, $p_{2}$, $p_{3}$ and $p_{4}$ are real numbers. If $p_{3} = 0$, then we have that $p_{2} = 0$ and $p_{1} = p_{4}$. Thus, $F = p_{1}I$, which is not possible. If $p_{3} \neq 0$, then the equation \eqref{eq} holds if and only if $w_{1}(x) = -a^{2}w_{2}(x)\left (x^{2} + ax \frac{(p_{1}-p_{4})}{p_{3}} + \frac{p_{2}}{p_{3}}\right )$. Note that if the polynomial $x^{2} + ax \frac{(p_{1}-p_{4})}{p_{3}} + \frac{p_{2}}{p_{3}}$ does not have two distinct real roots, then the scalar weight $w_{1}$ has non-positive values for all $x$, which cannot happen. Hence, $w_{1}(x) = -a^{2}w_{2}(x)(x-b)(x-c)$ with $b,\, c$ the distinct roots. If $b<c$, then $w_{1}(x)> 0$ almost everywhere on $(x_{0},x_{1})$ if and only if $(x_{0},x_{1}) \subseteq (b,c)$.

    Conversely, suppose that $w_{1}(x) = -a^{2}w_{2}(x)(x-b)(x-c)$ for some $b, \, c \in \mathbb{R}$ with $b\leq x_{0}<x_{1} \leq c$. We have that 
    $$W(x) = \begin{pmatrix} -{a}^{2}w_{2}(x)(bc-x(b+c))  
    && axw_{2}(x) \\ axw_{2}(x) && w_{2}(x) 
    \end{pmatrix}.$$
    For the constant nonsingular matrix $M = \begin{pmatrix} \frac{1}{a(b-c)} && -\frac{b}{b-c} \\ 1 && -ac \end{pmatrix}$ it follows that 
    $$MW(x)M^{\ast} = \begin{pmatrix} w_{2}(x)\frac{(x-b)}{(c-b)} && 0 \\ 0 && a^{2}w_{2}(x)(c-x)(c-b)\end{pmatrix}.$$
    Thus, the weight matrix $W$ reduces to a direct sum of scalar weights.
\end{proof}
In conclusion, for $N = 2$, the weight matrix $W$ is almost always irreducible. In particular, the $2 \times 2$ weight matrices discussed in the previous section are examples of irreducible weights. We now provide an example of a reducible weight constructed using scalar Jacobi weights.

\begin{example}
    Let $w_{1}(x) = a^{2}(1-x)^{\alpha+1}(1+x)^{\beta+1}$, and $w_{2}(x) = (1-x)^{\alpha}(1+x)^{\beta}$ be the scalar Jacobi weights supported on $(-1,1)$, where $\alpha, \, \beta > -1$, and $a\not=0$. The $2 \times 2$ weight matrix 
    $$W(x) = \begin{pmatrix} 1 && ax \\ 0 && 1 \end{pmatrix} \begin{pmatrix} w_{1}(x) && 0 \\ 0 && w_{2}(x) \end{pmatrix} \begin{pmatrix} 1 && 0 \\ ax && 1 \end{pmatrix} = (1-x)^{\alpha}(1-x)^{\beta}\begin{pmatrix} a^{2} && ax \\ ax && 1\end{pmatrix}$$
    reduces to 
    $$\begin{pmatrix}\frac{1}{2}(1-x)^{\alpha}(1+x)^{\beta+1} && 0 \\ 0 && 2a^{2}(1-x)^{\alpha+1}(1+x)^{\beta} \end{pmatrix}.$$
\end{example}

For the case of $N = 2$, we have a well-established criterion to determine when the weight matrix reduces. However, for $N = 3$, the situation is more complex. Although we can provide partial results, we have not yet developed a criterion to characterize when these weights are irreducible.

\begin{prop}
    Let $W$ be the $3 \times 3$ weight matrix defined in \eqref{W 3}. Assume that 
    $$
    w_{1}(x) \neq q_{2}(x)w_{2}(x), \quad w_{3}(x) \neq q_{1}(x)w_{2}(x), \quad \text{and} \quad w_{1}(x) \neq q_{0}(x)w_{3}(x),
    $$
    for any polynomials $q_{i}(x)$ of degree at most $i$. Then, the weight matrix $W$ is irreducible.
\end{prop}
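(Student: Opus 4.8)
The plan is to argue by contradiction using the same order-zero symmetry criterion that drove Proposition \ref{irr 2}. By \cite{TZ18} (Theorem 4.3), the weight $W$ is reducible precisely when its commutant is nontrivial, i.e.\ when there is a constant matrix $F=(p_{ij})\in\operatorname{Mat}_{3}(\mathbb{C})$ with $F\notin\mathbb{C}I$ and $FW(x)=W(x)F^{*}$ for all $x\in(x_{0},x_{1})$. So I would assume such an $F$ exists and aim to prove that $F$ must be scalar, contradicting $F\notin\mathbb{C}I$.

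The decisive simplification is to strip off $T(x)$. Since $W=T\tilde{W}T^{*}$ with $\tilde{W}=\operatorname{diag}(w_{1},w_{2},w_{3})$ and $T(x)=I+Ax$ with $A^{2}=0$, conjugating by $T$ turns the constant-coefficient identity $FW=WF^{*}$ into $G(x)\tilde{W}(x)=\tilde{W}(x)G(x)^{*}$, where $G(x)=T(x)^{-1}FT(x)=F+(FA-AF)x-AFA\,x^{2}$. Because $\tilde{W}$ is diagonal, this is equivalent to the scalar identities $G_{ij}(x)\,w_{j}(x)=w_{i}(x)\,\overline{G_{ji}(x)}$ for all $i,j$, where each $G_{ij}$ is an explicit polynomial of degree at most $2$ whose coefficients are linear in the $p_{k\ell}$. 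A direct computation from the displayed $A$ in \eqref{A} shows that $G_{21}=p_{21}$ and $G_{23}=p_{23}$ are constants, that $G_{13},G_{31}$ are affine, and that $G_{12},G_{32}$ are quadratic; the diagonal relations force each $G_{ii}$, and hence each $p_{ii}$, to be real.

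I would then read off the three off-diagonal relations and match each to one non-proportionality hypothesis. From $(1,2)$ we get $G_{12}w_{2}=\overline{p_{21}}\,w_{1}$: if $p_{21}\neq0$ then $w_{1}=(G_{12}/\overline{p_{21}})w_{2}$ exhibits $w_{1}$ as a degree $\le 2$ polynomial multiple of $w_{2}$, contradicting the first hypothesis; hence $p_{21}=0$ and $G_{12}\equiv0$, which yields $p_{12}=0$, $p_{23}=0$, and a linear relation among $p_{11},p_{13},p_{22}$. With $p_{21}=p_{23}=0$ in hand, the $(2,3)$ relation reduces to $0=w_{2}\overline{G_{32}}$, forcing $G_{32}\equiv0$, hence $p_{32}=0$ and a linear relation among $p_{22},p_{31},p_{33}$; here the hypothesis comparing $w_{3}$ and $w_{2}$ is the one governing this pair, ruling out the alternative in which $p_{23}\neq0$ balances the $w_{3}$ term. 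At this stage $G_{13}=p_{13}$ and $G_{31}=p_{31}$ are constants, so $(1,3)$ reads $p_{13}w_{3}=\overline{p_{31}}\,w_{1}$; since $w_{1}$ is not a constant multiple of $w_{3}$ (third hypothesis), this gives $p_{13}=p_{31}=0$. Feeding this back into the two linear relations collapses them to $p_{11}=p_{22}=p_{33}$, so $F=p_{11}I$ is scalar.

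I expect the only genuine subtlety to be the degree bookkeeping together with the order in which the entries are eliminated: the $(1,3)$ pair can be disposed of by the ``constant multiple'' hypothesis only after $p_{21}$ and $p_{23}$ (and with them the affine parts of $G_{13}$ and $G_{31}$) have already been killed through the $(1,2)$ relation. Thus the three hypotheses are not applied independently but in the cascade $(1,2)\to(2,3)\to(1,3)$, each step lowering the degree of the polynomials appearing in the next. Once $G(x)$ is written out explicitly, everything else is routine linear algebra.
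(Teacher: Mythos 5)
Your proof is correct and takes essentially the same approach as the paper's: the paper likewise reduces irreducibility to the nonexistence of a nonscalar order-zero $W$-symmetric matrix $F$ with $FW(x)=W(x)F^{\ast}$ and runs the identical $(1,2)\to(3,2)\to(1,3)$ cascade through the three hypotheses, the only cosmetic difference being that it reads the constraints off the entries of $W(x)^{-1}FW(x)$ directly, whereas you conjugate by $T(x)$ and compare $G(x)=T(x)^{-1}FT(x)$ against the diagonal weight $\tilde{W}$, which produces the same scalar equations. One incidental remark: in your cascade (exactly as in the paper's own computation) the $x^{2}$-coefficient of the $(1,2)$ constraint already forces $p_{23}=0$, so the hypothesis $w_{3}\neq q_{1}(x)w_{2}$ is invoked only redundantly at the $(2,3)$ step; this does not affect correctness.
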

\begin{proof}
    Let $F = \begin{pmatrix} p_{1} & p_{2} & p_{3} \\ p_{4} & p_{5} & p_{6} \\p_{7} & p_{8} & p_{9} \end{pmatrix} \in \mathcal{D}(W)$ be a $W$-symmetric operator of order zero. We have that $FW(x) = W(x)F^{\ast}$, then $W(x)^{-1}FW(x) = F^{\ast}$ is a constant matrix. The $(1,2)$-entry of $W^{-1}(x)FW(x)$ is given by
    $$-\frac{w_{2}(x)}{w_{1}(x)}((a_{1}^{2}p_{4}+a_{1}a_{2}p_{6})x^{2} +(a_{1}(p_{5}-p_{1})-a_{2}p_{3})x-p_{2}),$$
    which is equal to $\overline{p}_{4}$. Then, we have that $(a_{1}^{2}p_{4}+a_{1}a_{2}p_{6})x^{2} +(a_{1}(p_{5}-p_{1})-a_{2}p_{3})x-p_{2}=0$ and $p_{4} = 0$. From here, we obtain that 
    $p_{2} = 0, p_{6} = 0$, $p_{5} = p_{1} + \frac{a_{2}p_{3}}{a_{1}}$. Now, we have that the $(3,2)$-entry of $W(x)^{-1}FW(x)$ is
    $$\frac{w_{2}(x)}{w_{3}(x)}( (a_{1}p_{7}-a_{2}p_{1}+a_{2}p_{9}-\frac{a_{2}^{2}}{a_{1}}p_{3} )x+p_{8}).$$
    Thus, we have that $(a_{1}p_{7}-a_{2}p_{1}+a_{2}p_{9}-\frac{a_{2}^{2}}{a_{1}}p_{3} )x+p_{8} = 0$. Then, $p_{7} = \frac{a_{2}}{a_{1}^{2}}(a_{1}(p_{1}-p_{9})+a_{2}p_{3})$, and $p_{8} = 0$. Then, the $(1,3)$-entry is $\frac{w_{3}}{w_{1}}p_{3}$, which implies that $p_{3} = 0$. Finally, from $(2,3)$-entry we obtain that $p_{1} = p_{9}$. Thus, we obtain that $F = p_{1} I$. Hence, there are no nonscalar $W$-symmetric operators of order zero in $\mathcal{D}(W)$. Thus, the statement holds.
\end{proof}

\begin{prop}
     Let $W$ be the $3\times 3$ weight matrix as defined in \eqref{W 3}. If $w_{1} = w_{3}$, then the weight $W$ reduces. 
\end{prop}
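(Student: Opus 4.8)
The plan is to bypass the $W$-symmetric-operator criterion used in the previous propositions and instead exhibit an explicit constant invertible matrix $M$ that conjugates $W$ into block-diagonal form, so that reducibility follows straight from the definition. The key observation is that when $w_1 = w_3$ the diagonal weight $\tilde{W} = \operatorname{diag}(w_1, w_2, w_1)$ is invariant under any orthogonal mixing of the first and third coordinates, while the nilpotent matrix $A$ couples index $2$ to indices $1$ and $3$ only through the vector $(a_1, a_2)$. Rotating this vector to $(r, 0)$, with $r = \sqrt{a_1^2 + a_2^2}$, should therefore decouple the third coordinate entirely.

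Concretely, I would take
$$M = R = \frac{1}{r}\begin{pmatrix} a_1 & 0 & a_2 \\ 0 & r & 0 \\ -a_2 & 0 & a_1 \end{pmatrix}, \qquad r = \sqrt{a_1^2 + a_2^2},$$
which is real orthogonal, so $M^* = R^T = R^{-1}$. First I would record the two structural facts that make everything work. Since $R$ preserves the plane $\operatorname{span}(e_1, e_3)$ and fixes $e_2$, and since $\tilde{W}$ acts as the scalar $w_1$ on that plane (this is exactly where $w_1 = w_3$ enters) and as $w_2$ on $e_2$, we have $R\tilde{W}R^T = \tilde{W}$. Second, a direct computation gives $RAe_2 = a_1 Re_1 + a_2 Re_3 = re_1$ and $RAe_1 = RAe_3 = 0$, so $RA = rE_{12}$ and hence $RAR^T = rE_{12}$.

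With these in hand, using $R^T R = I$ and $W = T\tilde{W}T^*$ with $T = I + Ax$, I would write
$$MWM^* = R T \tilde{W} T^* R^T = (RTR^T)\,(R\tilde{W}R^T)\,(RTR^T)^* = \hat{T}\,\tilde{W}\,\hat{T}^*,$$
where $\hat{T} = RTR^T = I + x\,RAR^T = I + rx\,E_{12}$ couples only the first two coordinates. Since $\hat{T}$ fixes $e_3$ and $\tilde{W}$ is diagonal, the third row and column decouple, and a short multiplication yields
$$MWM^* = \begin{pmatrix} w_1 + r^2 x^2 w_2 & rxw_2 & 0 \\ rxw_2 & w_2 & 0 \\ 0 & 0 & w_1 \end{pmatrix},$$
which is the direct sum of the $2\times 2$ weight $\begin{psmallmatrix} w_1 + r^2 x^2 w_2 & rxw_2 \\ rxw_2 & w_2 \end{psmallmatrix}$ of the form \eqref{W 2} and the scalar weight $w_1$. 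As $M$ is invertible, this is precisely the statement that $W$ reduces.

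The computations here are entirely routine; the only real content is the choice of $M$. The main thing to get right is the commutation $R\tilde{W}R^T = \tilde{W}$, which is the single place the hypothesis $w_1 = w_3$ is used, together with the simplification $RAR^T = rE_{12}$, which collapses the two couplings $a_1, a_2$ into one and thereby frees the third coordinate.
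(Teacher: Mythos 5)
Your proof is correct and takes essentially the same route as the paper: the paper also proves reducibility by exhibiting an explicit constant invertible matrix $M$ and computing $MW(x)M^{\ast}$ directly into the direct sum of a scalar weight proportional to $w_{1}$ and a $2\times 2$ weight, the only difference being that the paper's $M$ is a non-orthogonal matrix chosen ad hoc while yours is the rotation in the $(e_{1},e_{3})$-plane. All your steps check out ($R$ orthogonal, $R\tilde{W}R^{T}=\tilde{W}$ using $w_{1}=w_{3}$, $RA=rE_{12}$ and $E_{12}R^{T}=E_{12}$, hence $\hat{T}=I+rxE_{12}$), and your orthogonal choice even yields the slightly cleaner outcome that the surviving $2\times 2$ block is exactly of the form \eqref{W 2} with parameter $r=\sqrt{a_{1}^{2}+a_{2}^{2}}$.
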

\begin{proof}
       We take the constant matrix $M = \begin{pmatrix}1 & 0 & -\frac{a_{1}}{a_{2}} \\ 0 & 1 & 0 \\ \frac{a_{1}a_{2}}{a_{1}^{2} + a_{2}^{2}} & 0 & \frac{a_{2}^{2}}{a_{1}^{2}+a_{2}^{2}} \end{pmatrix}$. It follows that 
    $$MW(x)M^{\ast} = \begin{pmatrix} \frac{a_{1}^{2}+a_{2}^{2}}{a_{2}^{2}}w_{1}(x) & 0 & 0 \\ 0 & w_{2}(x) & a_{2}xw_{2}(x) \\ 0 & a_{2}xw_{2}(x) & a_{2}^{2}x^{2}w_{2}(x) + \frac{a_{2}^{2}}{a_{1}^{2}+a_{2}^{2}}w_{2}(x)\end{pmatrix}.$$
    Thus, $W$ reduces to a direct sum $W_{1} \oplus W_{2}$ with $W_{1}(x) = \frac{a_{1}^{2}+a_{2}^{2}}{a_{2}^{2}}w_{1}(x)$, and $W_{2}$ the $2\times 2$ weight $W_{2}(x) = \begin{pmatrix}w_{2}(x) & a_{2}xw_{2}(x) \\ a_{2}xw_{2}(x) & a_{2}^{2}x^{2}w_{2}(x) + \frac{a_{2}^{2}}{a_{1}^{2}+a_{2}^{2}}w_{2}(x) \end{pmatrix}$.
\end{proof}

\begin{remark}
    For the case $N = 3$, if either $w_{1} = w_{2}$ or $w_{2} = w_{3}$, the weight $W$ defined in \eqref{W 3} does not necessarily reduce. For instance, consider $w_{1}(x) = e^{-x^{2}+2bx}$ for some $b \in \mathbb{R}-\{0\}$, and $w_{2}(x) = w_{3}(x) = e^{-x^{2}}$. In this case, we have
    $$W(x) = e^{-x^{2}}\begin{pmatrix} a_{1}^{2}x^{2}+e^{2bx} & a_{1}x & a_{1}a_{2}x^{2} \\ a_{1}x & 1 & a_{2}x \\ a_{1}a_{2}x^{2} & a_{2}x & a_{2}x^{2}+  1 \end{pmatrix}.$$
    $W$ does not reduce because its algebra $D(W)$ only contains scalar operators of order zero.
\end{remark}

As the size $N$ increases, characterizing all situations in which the weight $W$ is irreducible becomes more complex. In the next proposition, for size $N \times N$, we will provide a sufficient condition to ensure the irreducibility of the weight matrix. This result was proven in \cite{BP24-1} by studying the right Fourier algebras.

\begin{prop}[\cite{BP24-1}, Prop.~3.7]
    Let $w_{1},w_{2},\ldots,w_{N}$ be scalar weights supported on the same interval such that $w_{i}(x)w_{j}(x)^{-1}$ is not a rational function for all $i \not = j$. Then, the weight matrix 
    $$W(x) = T(x) \operatorname{diag}(w_{1}(x),\ldots,w_{N}(x))T(x)^{\ast}$$ as defined in \eqref{B} is an irreducible weight. 
\end{prop}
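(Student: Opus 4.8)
The plan is to run the same argument used above for the $N=2$ and $N=3$ cases, now for arbitrary size. By the criterion of \cite[Theorem 4.3]{TZ18}, the weight $W$ is reducible if and only if $\mathcal{D}(W)$ contains a non-scalar $W$-symmetric operator of order zero, i.e. a constant matrix $F \in \operatorname{Mat}_{N}(\mathbb{C})$ with $F \notin \mathbb{C}I$ satisfying $FW(x) = W(x)F^{\ast}$ for all $x$. So I would argue by contradiction, assuming such an $F$ exists and deriving a contradiction from the hypothesis that no ratio $w_{i}/w_{j}$ is rational.

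The crucial first step is to peel off the factor $T(x)$. Since $A \in \mathcal{M}$ and $M_1M_2=0$ for all $M_1,M_2 \in \mathcal{M}$ by \eqref{MM}, we have $A^{2}=0$ and hence $T(x)^{-1} = I - Ax$. Writing $W = T\tilde{W}T^{\ast}$ and conjugating $FW = WF^{\ast}$ by $T(x)$, I would set
$$G(x) = T(x)^{-1}FT(x) = F + (FA-AF)x - AFA\,x^{2},$$
and verify that the symmetry equation becomes $G(x)\tilde{W}(x) = \tilde{W}(x)G(x)^{\ast}$. Because $\tilde{W} = \operatorname{diag}(w_1,\dots,w_N)$ is diagonal, this reads entrywise as
$$g_{ij}(x)\,w_{j}(x) = w_{i}(x)\,\overline{g_{ji}(x)}, \qquad 1 \le i,j \le N,$$
where the $g_{ij}(x)$ are the polynomial entries of $G(x)$.

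This is exactly where the hypothesis enters. For $i \neq j$, if $g_{ij}$ were not identically zero then $g_{ji}$ would be nonzero as well (since $w_j>0$ almost everywhere), and the relation above would give $w_{i}(x)/w_{j}(x) = g_{ij}(x)/\overline{g_{ji}(x)}$, a rational function, contradicting the assumption. Hence every off-diagonal entry of $G(x)$ vanishes identically, so $G(x)$ is diagonal for all $x$. Comparing the coefficients of $x^{0},x^{1},x^{2}$ in $G(x) = F + (FA-AF)x - AFA\,x^{2}$, the vanishing of the off-diagonal part of the constant term forces $F$ to be diagonal. Once $F$ is diagonal, \eqref{DM} gives $FA, AF \in \mathcal{M}$, so $AFA = 0$ by \eqref{MM} and $FA-AF \in \mathcal{M}$ is purely off-diagonal; the requirement that $G(x)$ be diagonal then forces $FA - AF = 0$.

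To finish, I would invoke the elementary observation established in the proof of Theorem \ref{bisp}: for a diagonal $F = \operatorname{diag}(f_1,\dots,f_N)$ and the matrix $A$ of \eqref{A} with all $a_j \neq 0$, the relation $FA = AF$ holds if and only if $f_{2j-1}=f_{2j}$ and $f_{2j}=f_{2j+1}$ for the relevant ranges of $j$. These equalities chain together into $f_{1}=f_{2}=\cdots=f_{N}$, whence $F = f_{1}I$ is scalar, contradicting $F \notin \mathbb{C}I$. Thus $\mathcal{D}(W)$ has no non-scalar $W$-symmetric operator of order zero, and $W$ is irreducible. I expect the only genuinely non-formal input to be the use of the non-rationality of the $w_i/w_j$ to annihilate the off-diagonal entries of $G(x)$; once $G(x)$ is known to be diagonal, the remaining linear algebra with $\mathcal{M}$ and the chain $f_1=\cdots=f_N$ is forced by the nilpotent structure of $A$ and mirrors the computations already carried out for $N=2,3$.
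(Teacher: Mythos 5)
Your proof is correct, but note that it is not the route this paper takes: the proposition is quoted verbatim from \cite{BP24-1} (Prop.~3.7), and the text explicitly says it was proven there ``by studying the right Fourier algebras,'' with no proof reproduced here. What you have written is instead a uniform-in-$N$ generalization of the computations the paper carries out only for $N=2$ and $N=3$ in Section \ref{S5}: you invoke \cite[Theorem 4.3]{TZ18} to reduce irreducibility to showing every constant $F$ with $FW(x)=W(x)F^{\ast}$ is scalar; you conjugate by $T(x)$ (legitimate, since $A^{2}=0$ by \eqref{MM} gives $T(x)^{-1}=I-Ax$, and $G(x)=T(x)^{-1}FT(x)=F+(FA-AF)x-AFA\,x^{2}$ indeed satisfies $G(x)\tilde{W}(x)=\tilde{W}(x)G(x)^{\ast}$); you use the non-rationality hypothesis to annihilate each off-diagonal polynomial entry $g_{ij}$, correctly observing that $g_{ij}\not\equiv 0$ forces $g_{ji}\not\equiv 0$ and hence $w_{i}/w_{j}=g_{ij}/\overline{g_{ji}}$ rational; and then the nilpotent structure finishes the job, since $F$ diagonal makes $FA-AF\in\mathcal{M}$ purely off-diagonal by \eqref{DM}, so $FA=AF$, and the commutation observation from the proof of Theorem \ref{bisp} chains $f_{1}=f_{2}=\cdots=f_{N}$ because the conditions $f_{2j-1}=f_{2j}$, $1\leq j\leq [N/2]$, and $f_{2j}=f_{2j+1}$, $1\leq j\leq [(N-1)/2]$, cover every consecutive pair (all $a_{j}\neq 0$). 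Each step checks out. Comparing the two approaches: yours is elementary and entirely self-contained within this paper's toolkit (\eqref{DM}, \eqref{MM}, the observation in Theorem \ref{bisp}), and it makes transparent that non-rationality of the ratios $w_i/w_j$ is the sole analytic input, needed exactly once to force $G(x)$ diagonal; the Fourier-algebra argument of \cite{BP24-1} is heavier machinery but yields structural information about the right Fourier algebra and $\mathcal{D}(W)$ beyond bare irreducibility, which that paper also exploits for its Darboux non-reduction results. One small point of hygiene: the solution set of $FW=WF^{\ast}$ is only a \emph{real} vector space, so ``non-scalar'' should be read as $F\notin\mathbb{R}I$; this is harmless here, since your conclusion $F=f_{1}I$ rules out both readings.
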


\ 

The above proposition provides a sufficient condition for generating many irreducible examples of weight matrices for arbitrary sizes. However, there are several other cases where we obtain irreducible weights. For instance, the Laguerre-type weight in \eqref{ej Lag 5} is irreducible because its algebra, $\mathcal{D}(W)$, contains only scalar operators of order zero. Another case where the weight does not reduce occurs when we construct the weight $W$ from a collection of scalar weights given by $w_{1}, \ldots, w_{N}$, where $w_{i+1}(x) = w_{1}(x)x^{i-1}$. We have proven that for $N \leq 10$, the algebra $\mathcal{D}(W)$ contains only scalar operators of order zero. We conjecture that this result holds for arbitrary $N$. 

For example, the $10 \times 10$ Laguerre-type weight
$$W(x) = T(x) \operatorname{diag}(e^{-x}x^{\alpha}, e^{-x}x^{\alpha+1}, \ldots, e^{-x}x^{\alpha+9}) T(x)^{\ast}$$
is an irreducible weight.

\section{Appendix}

\begin{thm}\label{det tec}
    Let $a_{1},\ldots,a_{N-1},b_{1},\ldots,b_{N-1} \in \mathbb{R}$. We consider the $N \times N$ tridiagonal matrix 
    $$K = \begin{psmallmatrix} 1 & a_{1} & 0 & 0 & 0 & \ldots \\
    b_{1} & 1 & b_{2} & 0 & 0 & \ldots \\
    0 & a_{2} & 1 & a_{3} & 0 & \ldots \\
    0 & 0 & b_{3} & 1 & b_{4} & \ldots \\
    \vdots & \vdots & \vdots & \ddots & \ddots & \ddots\end{psmallmatrix}.$$
    For $i = 1, \ldots, N-1$, we define the numbers $\rho_{i} = -a_{i}b_{i}$. Then, 
    $$\det(K) = 1+\sum_{k=1}^{[(N+1)/2]} \sum_{1\leq i_{1}<i_{2}<\ldots<i_{k}\leq N-1}\rho_{i_{1}}\cdots \rho_{i_{k}}.$$
\end{thm}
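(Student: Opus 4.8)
The plan is to expand $\det(K)$ directly via the Leibniz permutation formula, where the tridiagonal structure collapses the sum to a transparent combinatorial one; I would also verify the result against the standard continuant recurrence as both a cross-check and an alternative proof.

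First I would write $\det(K) = \sum_{\sigma} \operatorname{sgn}(\sigma) \prod_{i=1}^{N} K_{i,\sigma(i)}$ and observe that, since $K_{ij} = 0$ whenever $|i-j| > 1$, a permutation $\sigma$ contributes a nonzero term only when $|\sigma(i) - i| \le 1$ for every $i$. Such permutations are exactly the products of pairwise disjoint adjacent transpositions $(i,i+1)$: any cycle of length at least three would displace some index by at least two, so $\sigma$ must be an involution whose two-cycles all have the form $(i,i+1)$, and two such transpositions occurring together must be disjoint, i.e. their indices cannot be consecutive. Thus the surviving permutations are indexed by the subsets $S \subseteq \{1,\ldots,N-1\}$ containing no two consecutive integers, with $\sigma$ the product of $\{(i,i+1) : i \in S\}$.

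Next I would compute the contribution of the permutation attached to such a set $S$. Its sign is $(-1)^{|S|}$; the fixed points contribute $\prod_{j \notin \operatorname{supp}(\sigma)} K_{jj} = 1$ since every diagonal entry equals $1$; and each transposition $(i,i+1)$ contributes $K_{i,i+1} K_{i+1,i}$. Reading off the superdiagonal and subdiagonal pattern shows $K_{i,i+1} K_{i+1,i} = a_i b_i$ in every case, so the total contribution is $(-1)^{|S|} \prod_{i \in S} a_i b_i = \prod_{i \in S} \rho_i$. Summing over all admissible $S$, the empty set producing the leading term $1$, yields precisely the claimed expression, where the inner sum is to be read over index sets $1 \le i_1 < \cdots < i_k \le N-1$ subject to $i_{j+1} \ge i_j + 2$ (no two consecutive indices); the bound $k \le [(N+1)/2]$ is simply the maximal cardinality such a set can have.

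The only delicate point, and hence the main obstacle, is the combinatorial bookkeeping that isolates exactly the surviving permutations and enforces the non-consecutiveness of the indices; once that is settled, the sign and entry computations are routine. As an independent confirmation I would check that both sides obey the continuant recurrence $D_m = D_{m-1} + \rho_{m-1} D_{m-2}$ with $D_0 = D_1 = 1$, where $D_m$ is the determinant of the top-left $m \times m$ block: on the determinant side this follows from cofactor expansion along the last row and column using $K_{mm} = 1$ and $K_{m-1,m} K_{m,m-1} = a_{m-1} b_{m-1}$, while on the combinatorial side it follows by splitting the admissible index sets according to whether the largest index $m-1$ is used (if it is, then $m-2$ cannot be), the familiar recursion for independent sets in a path. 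Induction on $m$ then delivers the formula at $m = N$.
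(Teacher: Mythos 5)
Your proof is correct, and your primary argument takes a genuinely different route from the paper's. The paper proves the identity by induction on $N$: it expands $\det(K)$ along the first row, then expands the resulting cofactor along its first column, which produces the continuant recurrence $\det(K_N)=\det(K_{N-1})+\rho_1\det(K_{N-2})$ run from the top-left corner, and then assembles the claimed sum from the two branches (indices starting at $2$, respectively at $3$ after extracting $\rho_1$). Your ``independent confirmation'' via $D_m=D_{m-1}+\rho_{m-1}D_{m-2}$, $D_0=D_1=1$, is essentially this same proof run from the bottom-right corner, so you have reproduced the paper's argument as your cross-check. Your main argument --- the Leibniz expansion, with the surviving permutations identified as involutions built from pairwise disjoint adjacent transpositions, i.e.\ matchings of the path on $\{1,\dots,N-1\}$, each contributing $(-1)^{|S|}\prod_{i\in S}a_ib_i=\prod_{i\in S}\rho_i$ --- is a direct, non-inductive proof, and it buys something concrete: it forces the observation, which you state explicitly, that the inner sum must range only over index sets with $i_{j+1}\ge i_j+2$. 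That restriction is genuinely needed: as literally printed, the unrestricted sum in the theorem is already false for $N=3$, where $\det(K)=1+\rho_1+\rho_2$ while the unrestricted formula would add the spurious term $\rho_1\rho_2$. The paper's induction silently produces only the non-consecutive products (the $\rho_1$ branch restricts the remaining indices to $\{3,\dots,N-1\}$), so the condition is implicit in its proof but omitted from the statement; your reading corrects this. One small slip on your side: the maximal cardinality of a subset of $\{1,\dots,N-1\}$ with no two consecutive elements is $[N/2]$, not $[(N+1)/2]$; for odd $N$ the stated upper limit of summation overshoots by one, which is harmless only because the corresponding inner sums are empty, so you should not call that bound the maximal cardinality.
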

\begin{proof}
    For $N = 2$, we have $K = \begin{psmallmatrix} 1 & a_{1} \\b_{1} & 1 \end{psmallmatrix}$, and $\rho_{1} = -a_{1}b_{1}$. We have $\det(K) = 1 - a_{1}b_{1} = 1 + \rho_{1}$. Thus, the statement holds. 
    We proceed by induction on $N$. We have $K = \begin{psmallmatrix} 1 & a_{1} & 0 & 0 & 0 & \ldots \\
    b_{1} & 1 & b_{2} & 0 & 0 & \ldots \\
    0 & a_{2} & 1 & a_{3} & 0 & \ldots \\
    0 & 0 & b_{3} & 1 & b_{4} & \ldots \\
    \vdots & \vdots & \vdots & \ddots & \ddots & \ddots\end{psmallmatrix}_{N\times N}$. We expand the determinant along the first row and obtain 
    $$\det(K) = 1\cdot \det \begin{psmallmatrix} 1 & a_{2} & 0 & 0 & 0 & \ldots \\
    b_{2} & 1 & b_{3} & 0 & 0 & \ldots \\
    0 & a_{3} & 1 & a_{4} & 0 & \ldots \\
    0 & 0 & b_{4} & 1 & b_{5} & \ldots \\
    \vdots & \vdots & \vdots & \ddots & \ddots & \ddots\end{psmallmatrix} - a_{1} \det \begin{psmallmatrix} b_{1} & b_{2} & 0 & 0 & 0 & \ldots \\
     0 & 1 & a_{3} & 0 & 0 & \ldots \\
     0 & b_{3} & 1 & b_{4} & 0 & \ldots \\
     0 & 0 & a_{4} & 1 & a_{5} & \ldots \\
     \vdots & \vdots & \vdots & \ddots & \ddots & \ddots\end{psmallmatrix}.$$
     Expanding the second determinant on the right-hand side along the first column, and using the inductive hypothesis, we obtain
     $$\det(K) = 1 + \sum_{k=1}^{[N/2]} \sum_{2 \leq i_{1} < \ldots < i_{k}\leq N-1} \rho_{i_{1}}\cdots \rho_{i_{k}} + \rho_{1}\left ( 1 + \sum_{k=1}^{[(N-1)/2]} \sum_{3 \leq i_{1} < \ldots < i_{k}\leq N-1} \rho_{i_{1}}\cdots \rho_{i_{k}}\right ).$$
     Thus, the statement holds.
\end{proof}

\bibliographystyle{plain}
\bibliography{referencias}

\end{document}